\crefname{hypothesis}{Hypothesis}{Hypotheses}
\title{Least-Squares ReLU Neural Network (LSNN) Method\\[1mm] for
Linear Advection-Reaction Equation\thanks{Submitted, January 6, 2021. Revised April 15 and April 28, 2021.}}
\author{Zhiqiang Cai\thanks{Department of Mathematics, Purdue University, 150 N. University Street, West Lafayette, IN 47907-2067 
  (\email{caiz@purdue.edu}, \email{chen2042@purdue.edu}).}
\and Jingshuang Chen\footnotemark[2]
\and Min Liu\thanks{School of Mechanical Engineering, Purdue University, 585 Purdue Mall,
West Lafayette, IN 47907-2088(\email{liu66@purdue.edu}). }}
\newcommand{\R}{\mathbb{R}}
\newcommand{\vertiii}[1]{{\left\vert\kern-0.25ex\left\vert\kern-0.25ex\left\vert #1 
    \right\vert\kern-0.25ex\right\vert\kern-0.25ex\right\vert}}
\newcommand{\btheta}{\mbox{\boldmath${\theta}$}}
\newcommand{\bomega}{\mbox{\boldmath$\omega$}}
\newcommand{\bxi}{\mbox{\boldmath$\xi$}}
\newcommand{\jump}[1]{[\![ #1]\!]}
\def\bb{{\bf b}}
\def\bc{{\bf c}}
\def\bv{{\bf v}}
\def\bx{{\bf x}}
\def\by{{\bf y}}
\def\cC{{\cal C}}
\def\cL{{\cal L}}
\def\cM{{\cal M}}
\def\cN{{\cal N}}
\def\cP{{\cal P}}
\def\cS{{\cal S}}
\def\cT{{\cal T}}
\begin{document}

\maketitle

\begin{abstract}
This paper studies least-squares ReLU neural network method for solving the linear advection-reaction problem with discontinuous solution. The method is a discretization of an equivalent least-squares formulation in the set of neural network functions with the ReLU activation function.
The method is capable of approximating the discontinuous interface of the underlying problem automatically through the {\it free} hyper-planes of the ReLU neural network and, hence, outperforms mesh-based numerical methods in terms of the number of degrees of freedom.
Numerical results of some benchmark test problems show that the method can not only approximate the solution with the least number of parameters, but also avoid the common Gibbs phenomena along the discontinuous interface. Moreover, a three-layer ReLU neural network is necessary and sufficient in order to well approximate a discontinuous solution with an interface in $\R^2$ that is not a straight line.
\end{abstract}

\begin{keywords}
 Least-Squares Method, ReLU Neural Network, Linear Advection-Reaction Equation
\end{keywords}

\begin{AMS}
 
\end{AMS}

\section{Introduction}

During the past several decades, numerical methods for linear advection-reaction equations have been intensively studied by many researchers and many numerical schemes have been developed. When inflow boundary data is discontinuous, so is the solution. It is well-known that 
traditional mesh-based numerical methods often exhibit oscillations near a discontinuity (called the Gibbs phenomena).
Such spurious oscillations are unacceptable for many applications (see, e.g, \cite{hesthaven2007nodal}). 
To eliminate or reduce the Gibbs phenomena, finite difference and finite volume methods often use numerical techniques such as limiters, filters, ENO/WENO, etc. \cite{gottlieb1997gibbs,hesthaven2017numerical,hesthaven2007nodal, leveque1992numerical}; and finite element methods 
usually employ discontinuous finite elements \cite{brezzi2004discontinuous,dahmen2012adaptive, demkowicz2010class} and/or adaptive mesh refinement (AMR) to generate locally refined elements along discontinuous interfaces (see, e.g., \cite{burman2009posteriori, houston1999posteriori, houston2000posteriori}). 

Recently, there has been increasing interests in using deep neural networks (DNNs) to solve partial differential equations (see, e.g., \cite{cai2020deep,raissi2019physics,Sirignano18}). DNNs produce a large class of functions through compositions of linear transformations and activation functions. One of the striking features of DNNs is that this class of functions is not subject to a hand-crafted geometric mesh or point cloud as are the traditional, well-studied finite difference, finite volume, and finite element methods. The physical partition of the domain $\Omega$, formed by free hyper-planes, can automatically adapt to the target function. This is much better than the AMR generated mesh because AMR is based on a geometric mesh and subject to mesh conformity; moreover, it is not easy to remove unnecessary elements or points. This paper will make use of this powerful approximation property of DNNs for solving linear advection-reaction problem with discontinuous solution. 

DNN functions are nonlinear functions of the parameters. Hence, the advection-reaction equation will be discretized through least-squares principles. In the context of finite element approximations, several least-squares methods have been studied (see, e.g., \cite{bochev2001improved, bochev2016least,bochev2001comparative,carey1988least,de2004least,de2005numerical,liu2020adaptive}). Basically, there are two least-squares formulations which are equivalent to the original differential equation. One is a direct application of least-squares principle
(see, e.g., \cite{bochev2001improved, de2004least}) with a weighted $L^2$ norm for the inflow boundary condition, where the weight is the magnitude of the normal component of the advection velocity field.
The other is to apply the least-squares principle to an equivalent system of the underlying problem by introducing an additional flux variable (see \cite{de2005numerical, liu2020adaptive}). Some numerical techniques such as feedback least-squares finite element method \cite{bochev2016least}, adaptive local mesh refinement with proper finite elements \cite{liu2020adaptive}, etc. were introduced in order to reduce the Gibbs phenomena for problems with discontinuous solutions.

The purpose of this paper is to study the least-squares neural network (LSNN) method for solving the linear advection-reaction problem with discontinuous solution. The LSNN method is based on the least-squares formulation studied in (\cite{bochev2001improved, de2004least}), i.e., a direct application of the lease-squares principle to the underlying problem, and on the ReLU neural network as the class of approximating functions. 
The class of neural network functions enables the LSNN method to automatically approximate the discontinuous solution without using {\it a priori} knowledge of the location of the discontinuities. Compared to various AMR methods that locate the discontinuous interface through local mesh refinement, the LSNN method is much more effective in terms of the number of the degrees of freedom (see, e.g., Fig. \ref{test1_1_compare_figure}(c) and \ref{test1_1_figure}(c)). 

Theoretically, it is proved in \cite{de2004least} that the homogeneous least-squares functional is equivalent to a natural norm in the solution space $V_{\bm\beta}$ consisting of all square-integrable functions whose directional derivative along ${\bm \beta}$ is also square-integrable (see section 2). 
This equivalence enables us to prove Ce\'{a}'s lemma for the LSNN approximation, i.e., the error of the LSNN approximation is bounded by the approximation error of the set of ReLU neural network functions. This result is extended to the LSNN method with numerical integration as well. 
Even though approximation theory of the ReLU neural network has been intensively studied by many researchers (see, e.g., \cite{pinkus1999approximation} for work before 2000 and \cite{shen2019deep,siegel2020approximation}), we are not able to find a result which is applicable to the discontinuous solution of the advection-reaction problem. 

To explore how well the ReLU neural network approximates the discontinuous solution, we consider two-dimensional transport problem, i.e., (\ref{pde-1}) with $\hat{\gamma}=0$. When the boundary data $g$ is discontinuous at point $\bx_0\in\Gamma_-$, 
the solution of the transport problem is discontinuous across an interface: the streamline of the advection velocity field starting at $\bx_0$. The solution of this problem can be decomposed as the sum of a piece-wise constant function and a continuous piece-wise smooth function (see, e.g., (\ref{decop})). We show that the piece-wise constant function can be approximated well without the Gibbs phenomena by either a two- or a three-layer ReLU neural network with the minimal number of neurons depending on the shape of the interface (see Lemmas 3.1 and 5.1). Together with the universal approximation property, this implies that a two- or three-layer ReLU neural network is sufficient to well approximate
the solution of the linear transport problem without oscillation. These theoretical results are confirmed by numerical results. 

The procedure for determining the values of the parameters of the network is now a problem in nonlinear optimization even though the underlying PDE is linear. This high dimensional, nonlinear optimization problem usually has many solutions. In order to obtain the desired one, we need to start from a close enough first approximation, and a common way to do so is by the method of continuation. In this paper, we propose the method of model continuation through approximating the advection velocity field by a family of piece-wise constant vector fields. Numerical results for a test problem with variable velocity field show that this method is able to reduce the total number of the parameters significantly.

The paper is organized as follows. Section 2 introduces the advection-reaction problem, its least-squares formulation, and preliminaries. The ReLU neural network and the least-squares neural network are described and analyzed in section 3. Initialization for the two-layer neural network and the method of model continuation for initialization are presented in sections 4 and 6, respectively. Finally, numerical results for various benchmark test problems are given in section 5.

Standard notations and definitions are used for the Sobolev space $H^s(\Omega)^d$ and $H^s(\Gamma_{-})^d$ when $s \geq 0$. The associated norms with these two spaces are denoted by $\|\cdot\|_{s,\Omega}$ and $\|\cdot\|_{s,\Gamma_{-}}$, and their respective inner products are denoted as $(\cdot, \cdot)_{s,\Omega} $ and $(\cdot, \cdot)_{s,\Gamma_{-}} $. For $s=0$ case, $H^s(\Omega)^d$ is the same as $L^2(\Omega)^d$, then the norm and inner product are simply denoted as $\|\cdot\|$ and $(\cdot, \cdot)$, respectively. The subscripts $\Omega$ in the designation of norms will be suppressed when there is no ambiguity.

\section{Problem Formulation}
Let $\Omega$ be a bounded domain in ${\R}^d$ with
Lipschitz boundary, and denote the advective velocity field by $\bm{\beta}(\bx) = (\beta_1, \cdots, \beta_d)^T\in C^1(\bar{\Omega})^d$. Define the inflow and outflow parts of the boundary $\Gamma=\partial \Omega$ by
\begin{equation}
    \Gamma_- = \{\bx\in\Gamma :\, \bm{\beta}(\bx) \cdot \bm{n}(\bx) <0\}
    \quad\mbox{and}\quad
    \Gamma_+ = \{\bx\in\Gamma :\, \bm{\beta}(\bx) \cdot \bm{n}(\bx) > 0\},
\end{equation}
respectively, where $\bm{n}(\bx)$ is the unit outward normal vector to $\Gamma$ at $\bx\in \Gamma$.

As a model hyperbolic boundary value problem, we consider the linear advection-reaction equation 
\begin{equation}\label{pde}
    \left\{\begin{array}{rccl}
    \nabla \!\cdot\! (\bm \beta u) + \gamma u &= & f &\text{ in }\,\, \Omega, \\
    u&=&g &\text{ on }\,\, \Gamma_{-},
    \end{array}\right.
\end{equation}
where $\gamma \in C(\bar{\Omega})$, $f \in L^2(\Omega)$, and $g \in L^2(\Gamma_-)$ are given scalar-valued functions. We assume that there exist a positive constant $\gamma_0$ such that
\begin{equation}\label{gamma}
    \gamma (\bx) + \frac{1}{2}\nabla \cdot \bm{\beta} (\bx) \geq \gamma_0 >0\quad  \text{ for all } \bx\in \Omega.
\end{equation}
For simplicity of presentation, we also assume that $g$ is bounded so that streamline functions from $\Gamma_{-}$ to $\Gamma_{+}$ is not needed (see \cite{de2004least}).

Denote by $v_{\bm\beta} = \bm{\beta}\cdot \nabla v$ the directional derivative along the advective velocity field $\bm{\beta}$, then (\ref{pde}) may be rewritten as follows
\begin{equation}\label{pde-1}
    \left\{\begin{array}{rccl}
    u_{\bm\beta} + \hat{\gamma}\, u &=&f &\text{ in }\, \Omega, \\[2mm]
    u&=&g &\text{ on }\,\, \Gamma_{-},
    \end{array}\right.
\end{equation}
where $\hat{\gamma} = \gamma + \nabla\! \cdot\! \bm{\beta}$.
The solution space of (\ref{pde}) is given by
 \[
 V_{\bm\beta} = \{v\in L^2(\Omega): v_{\bm{\beta}}\in L^2(\Omega)\},
 \]
which is equipped with the norm as
 \[
 \vertiii{v}_{\bm\beta}= \left(\|v\|_{0,\Omega}^2 + \|v_{\bm\beta}
 \|_{0,\Omega}^2 \right)^{1/2}.
 \]
Denote the weighted $L^2(\Gamma_{-})$ norm over the inflow boundary by
 \[
 \|v\|_{-\bm{\beta}} 
 =\left<v,v\right>^{1/2}_{-\bm{\beta}} 
 =\left( \int_{\Gamma_-} |\bm{\beta}\! \cdot \!\bm{n}|\, v^2\,ds\right)^{1/2}.
 \]
The following trace and Poincar\'{e} inequalities are proved in \cite{de2004least} (see also \cite{bochev2016least}) that there exist positive constants $C_t$ and $C_p$ such that 
\begin{equation}\label{trace}
 \|v\|_{-\bm{\beta}} \leq C_t\, \vertiii{v}_{\bm\beta},\quad \forall\,\,v\in V_{\bm\beta}
\end{equation}
and 
\begin{equation}\label{trace1}
 \|v\|_{0,\Omega} \leq C_p\left(\|v\|_{-\bm{\beta}}  + D\, \|v_{\bm\beta}\|_{0,\Omega} \right), \quad \forall\,v\in V_{\bm\beta},
\end{equation}
respectively, where $D=\text{diam} (\Omega)$ is the diameter of the domain $\Omega$.

\begin{remark} 
Let $\cC$ be the streamline of the advection velocity field ${\bm \beta}$ starting at $\bx_0\in\Gamma_-$ in two dimensions.  Assume that the inflow boundary condition $g$ is discontinuous at $\bx_0$.
Then it is easy to see that the solution of {\em (\ref{pde})} is also discontinuous across $\cC$ because the restriction of the solution on $\cC$ satisfies the same differential equation but different initial condition.
Moreover, if $\hat{\gamma}=0$, then the jump of the solution along $\cC$ is a constant $|g(\bx_0^+)-g(\bx_0^-)|$,
where $g(\bx_0^+)$ and $g(\bx_0^-)$ are the values of $g$ at $\bx_0$ from different sides.
The streamline $\cC$ is referred to be the discontinuous interface.
\end{remark}

In the remainder of this section, we describe the least-squares (LS) formulation following \cite{bochev2016least, de2004least}. To this end, define the LS functional
 \begin{equation}\label{ls}
    \mathcal{L}(v;{\bf f}) = \|v_{\bm\beta} +\hat{\gamma}\, v-f\|_{0,\Omega}^2 +  \|v-g\|_{-\bm\beta}^2 
\end{equation}
for all $v\in V_{\bm\beta}$, where ${\bf f} = (f,g)$. Now, the corresponding least-squares formulation is to seek $u\in V_{\bm\beta}$ such that
\begin{equation}\label{minimization1}
    \mathcal{L}(u;{\bf f}) = \min_{\small v\in V_{\bm\beta}} \mathcal{L}(v;{\bf f}).
\end{equation}
It follows from the trace, triangle, and Poincar\'{e} inequalities and assumptions on ${\bm \beta}$ and $\gamma$ that the homogeneous LS functional $\mathcal{L}(v;{\bf 0})$ is equivalent to the norm $\vertiii{v}_{\bm\beta}^2$, i.e., there exist positive constants $\alpha$ and $M$ such that 
\begin{equation}\label{equiv}
\alpha\, \vertiii{v}_{\bm\beta}^2 
\leq \mathcal{L}(v;{\bf 0}) \leq M\, \vertiii{v}_{\bm\beta}^2.
\end{equation}
Furthermore, problem (\ref{minimization1}) has a unique solution $u\in V_{\bm\beta}$ satisfying the following {\it a priori} estimate
\begin{equation}\label{stability}
\vertiii{u}_{\bm\beta} \leq C\, \left(\|f\|_{0,\Omega} + \|g\|_{-\bm\beta}\right).
\end{equation}

Denote the bilinear and linear forms by
\[
a(u,v)=(u_{\bm\beta} + \hat{\gamma}\, u,
v_{\bm\beta} + \hat{\gamma}\, v) + \left<u,v\right>_{-\bm{\beta}}
\quad\mbox{and}\quad
f(v)=(f,
v_{\bm\beta} + \hat{\gamma}\, v) + \left<g,v\right>_{-\bm{\beta}},
\]
respectively. Then the minimization problem in (\ref{minimization1}) is to find $u\in V_{\bm\beta}$ such that
\begin{equation}\label{vp}
    a(u,v)=f(v), \quad\forall\,\, v\in V_{\bm\beta}.
\end{equation}

\section{Least-Squares Neural Network Method}

This section describes deep neural networks and the corresponding least-squares method for linear transport equations. 

We consider a deep neural network (DNN) with a scalar-valued output as
\[
\cN:\, \bx\in\R^{d}
\longrightarrow \cN(\bx)\in\R.
\]
The DNN function $\cN(\bx)$ is typically represented as compositions of many layers of functions:  
\begin{equation}\label{DNN}
 \cN(\bx)=N^{(L)} \circ \cdots N^{(2)}\circ N^{(1)}(\bx),
\end{equation}
where the symbol $\circ$ denotes the composition of functions, and $L$ is the depth of the network. In this case, $N^{(l)}$ is called the $l^{th}$ layer of the network. All layers except the last one $N^{(L)}$ are called hidden layers. A layer 
$N^{(l)}: \R^{n_{l-1}} \rightarrow \R^{n_{l}}$ is defined as a composition of a linear transformation $T^{(l)}: \R^{n_{l-1}} \rightarrow \R^{n_{l}}$ and an activation function $\sigma: \R \rightarrow \R$ as follows:
\begin{equation}\label{layerdef}
   N^{(l)}(\bx^{(l-1)})=\sigma \big( T^{(l)}(\bx^{(l-1)})\big)= \sigma  (\bomega^{(l)}\bx^{(l-1)}-\bb^{(l)})
   \quad\mbox{for } \bx^{(l-1)}\in \R^{n_{l-1}},
\end{equation}
where $\bomega^{(l)} 
\in \R^{n_{l}\times n_{l-1}}$, $\bb^{(l)}\in \R^{n_{l}}$, $\bx^{(0)}=\bx$, and application of $\sigma$ to a vector is defined component-wise. There is typically no activation function in the output layer.
Components of $\bomega^{(l)}$ and $\bb^{(l)}$ are called weights and bias, respectively, and are parameters to be determined (trained). 
Each component of the vector-valued function $N^{(l)}$ is interpreted as a neuron and the dimension $n_{l}$ defines the width or the number of neurons of the $l^{\text{th}}$ layer in a network. This paper will use the popular rectified linear unit (ReLU) activation function defined by

\begin{equation}\label{tau-k}
 \sigma(t) = \max\{0,\,t\}
 =\left\{\begin{array}{rclll}
 0, & \mbox{if }  t\leq 0,\\[2mm]
 t, & \mbox{if } t >0.
 \end{array}\right.
 \end{equation}

For given integers $\{n_l\}_{l=1}^L$, denote the set of DNN functions by
\[
\cM({\small\btheta},L)=\big\{\cN(\bx)= N^{(L)} \circ \cdots \circ N^{(1)}(\bx) :\,  \bomega^{(l)} 
\in \R^{n_{l}\times n_{l-1}},\,\, \bb^{(l)}\in \R^{n_{l}} \mbox{ for } l=1,...,L
\big\},
\]
where $N^{(l)}(\bx^{(l-1)})$ is defined in (\ref{layerdef}) and ${\small\btheta}$ denotes all parameters: $\bomega^{(l)}$ and $\bb^{(l)}$ for $l=1,...,L$.
It is easy to see that $\cM({\small\btheta},L)$ is a subset of $V_{\bm\beta}$, but not a linear subspace. The least-squares approximation is to find $u_{_N}(\bx;{\small\btheta}^*) \in \cM({\small\btheta},L)$ such that 
\begin{equation}\label{L-NN}
     \mathcal{L}\big(u_{_N}(\bx;{\small\btheta}^*);\,{\bf f}\big)
     = \min\limits_{v\in \cM({\scriptsize\btheta},L)} \mathcal{L}\big(v(\bx;{\small\btheta});\,{\bf f}\big)
     = \min_{{\scriptsize \btheta}\in\R^{N}}\mathcal{L}\big(v(\bx;{\small\btheta});\,{\bf f}\big),
\end{equation}
where $N$ is the total number of parameters in $\cM({\small\btheta},L)$ given by
\[
N=M_d(L) =\sum^L_{l=1} n_{l}\times (n_{l-1}+1).
\]

\begin{lemma}
Let $u$ and $u_{_N}$ be the solutions of problems {\em (\ref{ls})} and {\em (\ref{L-NN})}, respectively. Then we have
\begin{equation}\label{Cea-L}
    \vertiii{u-u_{_N}}_{\bm\beta}
    \le \left(\dfrac{M}{\alpha}\right)^{1/2} \inf_{v\in \cM({\scriptsize\btheta},L)} \vertiii{u-v}_{\bm\beta},
\end{equation}
where $\alpha$ and $M$ are constants in {\em (\ref{equiv})}.
\end{lemma}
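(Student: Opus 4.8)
The plan is to imitate the classical Céa's lemma argument, but to route it entirely through the least-squares functional values rather than through Galerkin orthogonality, since $\cM(\btheta,L)$ is not a linear subspace and the variational formulation (\ref{vp}) therefore cannot be tested against arbitrary elements of the set. The whole argument rests on a single algebraic identity: because the exact solution $u$ satisfies $u_{\bm\beta}+\hat{\gamma}\,u=f$ in $\Omega$ and $u=g$ on $\Gamma_-$, the data can be eliminated from the residual. Substituting $f=u_{\bm\beta}+\hat{\gamma}\,u$ and $g=u|_{\Gamma_-}$ into (\ref{ls}) gives, for every $v\in V_{\bm\beta}$,
\[
\mathcal{L}(v;{\bf f}) = \|(v-u)_{\bm\beta}+\hat{\gamma}\,(v-u)\|_{0,\Omega}^2 + \|v-u\|_{-\bm\beta}^2 = \mathcal{L}(v-u;{\bf 0}).
\]
In particular $\mathcal{L}(u;{\bf f})=0$, and the inhomogeneous functional evaluated at any $v$ equals the homogeneous functional evaluated at the error $v-u$.

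First I would apply the lower bound in the norm equivalence (\ref{equiv}) to the error $u-u_{_N}$ and then use the identity to convert it into a functional value at $u_{_N}$, obtaining $\alpha\,\vertiii{u-u_{_N}}_{\bm\beta}^2 \le \mathcal{L}(u-u_{_N};{\bf 0}) = \mathcal{L}(u_{_N};{\bf f})$. Next I would invoke the defining minimization property (\ref{L-NN}) of $u_{_N}$: since $u_{_N}$ minimizes $\mathcal{L}(\,\cdot\,;{\bf f})$ over $\cM(\btheta,L)$, we have $\mathcal{L}(u_{_N};{\bf f})\le\mathcal{L}(v;{\bf f})$ for every $v\in\cM(\btheta,L)$. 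Applying the identity once more and then the upper bound in (\ref{equiv}) yields $\mathcal{L}(v;{\bf f})=\mathcal{L}(v-u;{\bf 0})\le M\,\vertiii{u-v}_{\bm\beta}^2$. Chaining these inequalities gives $\alpha\,\vertiii{u-u_{_N}}_{\bm\beta}^2\le M\,\vertiii{u-v}_{\bm\beta}^2$ for arbitrary $v\in\cM(\btheta,L)$; taking the infimum over $v$ and then a square root produces (\ref{Cea-L}).

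The steps are individually short, so the only real subtlety — and the point worth stating carefully — is that the minimization property of $u_{_N}$ is used in place of orthogonality. A linear Galerkin/least-squares argument would subtract the equation for $u_{_N}$ from that for $u$ and exploit that $u_{_N}-v$ is an admissible test function; here that is unavailable, because differences of network functions need not lie in $\cM(\btheta,L)$. What rescues the proof is that we never need a test function: the chain above only compares the scalar values $\mathcal{L}(u_{_N};{\bf f})\le\mathcal{L}(v;{\bf f})$, an inequality that holds for \emph{any} subset of $V_{\bm\beta}$ whatsoever. I would therefore make the identity $\mathcal{L}(v;{\bf f})=\mathcal{L}(v-u;{\bf 0})$ explicit at the outset, since it is precisely the mechanism that lets the coercivity and continuity constants $\alpha$ and $M$ from the homogeneous equivalence (\ref{equiv}) carry the entire estimate.
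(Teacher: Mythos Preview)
Your proposal is correct and follows essentially the same argument as the paper: the chain $\alpha\,\vertiii{u-u_{_N}}_{\bm\beta}^2 \le \mathcal{L}(u-u_{_N};{\bf 0}) = \mathcal{L}(u_{_N};{\bf f}) \le \mathcal{L}(v;{\bf f}) = \mathcal{L}(u-v;{\bf 0}) \le M\,\vertiii{u-v}_{\bm\beta}^2$ is exactly what the paper writes, only you have spelled out the identity $\mathcal{L}(v;{\bf f})=\mathcal{L}(v-u;{\bf 0})$ and the reason Galerkin orthogonality is not needed more explicitly than the paper does.
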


\begin{proof}
For any $v\in \cM({\small\btheta},L)\subset V_{\bm\beta}$, it follows from the coercivity and continuity of the homogeneous functional $ \mathcal{L}\big(v;\,{\bf 0}\big)$ in (\ref{equiv}), problem (\ref{pde}), and (\ref{L-NN}) that
\begin{eqnarray*}
\alpha\, \vertiii{u-u_{_N}}^2_{\bm\beta}
&\le &  \mathcal{L}\big(u-u_{_N};\,{\bf 0}\big)
= \mathcal{L}\big(u_{_N}(\bx;{\small\btheta}^*);\,{\bf f}\big)\\[2mm]
&\le & \mathcal{L}\big(v(\bx;{\small\btheta});\,{\bf f}\big)
= \mathcal{L}\big(u-v;\,{\bf 0}\big)
\le M \,\vertiii{u-v}^2_{\bm\beta},
\end{eqnarray*}
which implies (\ref{Cea-L}). This completes the proof of the lemma.
\end{proof}

For a given vector $\bxi\in \R^d$ and $c\in \R$, assume that the hyper-plane $\cP:\,\bxi\cdot\bx =c$ divides the domain $\Omega$ into two non-empty subdomains $\Omega_1$ and $\Omega_2$, i.e., 
\[
\Omega_1=\{\bx\in\Omega :\, \bxi\cdot\bx <c\}
\quad\mbox{and}\quad
\Omega_2=\{\bx\in\Omega :\, \bxi\cdot\bx >c\}.
\]
Let $\chi(\bx;\bxi,c)$ be a piece-wise constant function defined by 
 \[
 \chi(\bx;\bxi,c)=\left\{\begin{array}{ll}
 \alpha_1, & \bx \in \Omega_1,\\[2mm]
 \alpha_2, & \bx \in \Omega_2.
 \end{array}
 \right.
 \]
 
\begin{lemma}
Let $p(\bx)$ be a two-layer neural network function given by
\[
p(\bx)=\alpha_1 +
\dfrac{\alpha_2-\alpha_1}{2\varepsilon} \Big(\sigma(\bxi\cdot\bx -c + \varepsilon) - \sigma(\bxi\cdot\bx -c-\varepsilon)\Big)
\]
for any $\varepsilon>0$ such that intersections between the domain $\Omega$ and the hyper-planes $\bxi\cdot\bx =c\pm\varepsilon$ are not empty. Then we have 
 \begin{equation}\label{chi-est}
     \|\chi - p\|_{0,\Omega}
     =\left(\|\chi - p\|^2_{0,\Omega} + \|\chi_{\scriptsize\bm\eta} - p_{\scriptsize\bm\eta}\|^2_{0,\Omega}\right)^{1/2} 
     \leq {\dfrac{1}{\sqrt{6}}}\, D^{(d-1)/2}\, \big|\alpha_1-\alpha_2\big|\, \sqrt{\varepsilon},
 \end{equation}
where ${\bm\eta}$ is a vector normal to ${\bxi}$ and $D$ is the diameter of the domain $\Omega$.
\end{lemma}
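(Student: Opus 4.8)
The plan is to exploit that both $\chi$ and $p$ depend on $\bx$ only through the single scalar $s=\bxi\cdot\bx-c$, which collapses the whole estimate onto a one-dimensional integral across the interface. Throughout I take $\bxi$ to be a unit vector, so that $s$ is the signed distance to the hyperplane $\cP$ (a non-unit $\bxi$ merely rescales $\varepsilon$ and the constant). Unpacking the two-layer function, the two ReLU's switch on at $s=-\varepsilon$ and $s=\varepsilon$, so that $p=\alpha_1$ for $s\le-\varepsilon$, $p=\alpha_2$ for $s\ge\varepsilon$, and $p=\alpha_1+\tfrac{\alpha_2-\alpha_1}{2\varepsilon}(s+\varepsilon)$ on the transition band $-\varepsilon<s<\varepsilon$. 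Consequently $p$ agrees with $\chi$ outside the slab $S=\{\bx\in\Omega:|\bxi\cdot\bx-c|<\varepsilon\}$, so $\chi-p$ is supported in $S$ and is a function of $s$ alone.

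Next I would dispose of the directional-derivative term, which is exactly what makes the first equality in (\ref{chi-est}) hold. Because $\bm\eta$ is orthogonal to $\bxi$, any displacement along $\bm\eta$ leaves $s=\bxi\cdot\bx-c$ unchanged; since $\chi$ and $p$ are functions of $s$ only, both are constant along $\bm\eta$, whence $\chi_{\bm\eta}=p_{\bm\eta}=0$ almost everywhere and $\|\chi_{\bm\eta}-p_{\bm\eta}\|_{0,\Omega}=0$.

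For the remaining $L^2$ term I would pass to orthonormal coordinates $(s,\by')$ with $\by'$ ranging over the hyperplane $\bxi^\perp$. By Fubini and nonnegativity of the integrand,
\begin{equation*}
\|\chi-p\|_{0,\Omega}^2=\int_{\Pi(S)}\Big(\int(\chi-p)^2\,ds\Big)\,d\by'\le|\Pi(S)|\int_{-\varepsilon}^{\varepsilon}(\chi-p)^2\,ds,
\end{equation*}
where $\Pi(S)$ denotes the orthogonal projection of $S$ onto $\bxi^\perp$. The inner one-dimensional integral is elementary: on each of $(-\varepsilon,0)$ and $(0,\varepsilon)$ the integrand equals $\tfrac{(\alpha_1-\alpha_2)^2}{4\varepsilon^2}$ times a shifted square, each half contributing $\tfrac{(\alpha_1-\alpha_2)^2}{4\varepsilon^2}\cdot\tfrac{\varepsilon^3}{3}$, for a total of $\tfrac16(\alpha_1-\alpha_2)^2\varepsilon$; this produces the constant $1/\sqrt6$. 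Since $\Pi(S)$ has diameter at most $D$ in $\R^{d-1}$, it fits inside a cube of side $D$, so $|\Pi(S)|\le D^{d-1}$. Combining the two estimates and taking square roots yields (\ref{chi-est}).

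The genuinely delicate step is this last geometric reduction: one must justify the Fubini slicing and, above all, bound the cross-sectional footprint by $D^{d-1}$ via the cube bound, which is what gives the clean constant and is sharp when $d=2$, rather than by the volume of a ball of diameter $D$. Everything else is either the piecewise description of $p$ or a single-variable integration.
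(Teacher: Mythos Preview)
Your argument is correct and follows essentially the same route as the paper: both pass to coordinates aligned with $\bxi$, note that $\chi_{\bm\eta}-p_{\bm\eta}=0$, write out $\chi-p$ piecewise on the slab of half-width $\varepsilon$, bound the transverse $(d-1)$-dimensional extent by $D^{d-1}$, and compute the same one-dimensional integral $\frac{(\alpha_1-\alpha_2)^2}{4\varepsilon^2}\cdot\frac{2\varepsilon^3}{3}$. The only cosmetic difference is that the paper slices at fixed $s$ and bounds each cross-section by $D^{d-1}$, whereas you project onto $\bxi^\perp$ first and bound $|\Pi(S)|\le D^{d-1}$; both give the identical constant.
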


\begin{proof}
Let 
\[
\Omega_\varepsilon=\Omega_{\varepsilon,1}\cup \Omega_{\varepsilon,2}
\equiv \{\bx\in \Omega :\, c-\varepsilon< \bxi\cdot\bx < c\}\cup
\{\bx\in\Omega :\, c< \bxi\cdot\bx < c+\varepsilon\}.
\]
The equality in (\ref{chi-est}) follows from the fact that $\chi_{\scriptsize\bm\eta} - p_{\scriptsize\bm\eta}=0$. To show the validity of the inequality in (\ref{chi-est}), first we have 
\[
\chi -p =\left\{\begin{array}{cl}
    \dfrac{\alpha_1-\alpha_2}{2\varepsilon} \,\big(\bxi\cdot\bx -c + \varepsilon\big), & \bx\in\Omega_{\varepsilon,1},  \\[4mm]
    \dfrac{\alpha_1-\alpha_2}{2\varepsilon} \,\big(\bxi\cdot\bx -c - \varepsilon\big), & \bx\in\Omega_{\varepsilon,2},  \\[4mm]
    0,  & \bx\in \Omega\setminus \Omega_\varepsilon.
\end{array}\right.
\]
By a rotation of the coordinates, $\bx=(s,\by)$, it is easy to see that the domain $\Omega_{\varepsilon,1}$ is bounded by the hyper-planes $s=c-\varepsilon$ and $s=c$ and the hyper-surfaces $\varphi_1(s)$ and $\varphi_2(s)$ on $\partial\Omega$. Hence, we have
\[ 
    \int_{\Omega_{\varepsilon,1}} \big(\bxi\cdot\bx -c + \varepsilon\big)^2\,d\bx 
=\int_{c-\varepsilon}^{c}\int^{\varphi_2(s)}_{\varphi_1(s)}\big(s -c + \varepsilon\big)^2\,d\by\, ds
\leq D^{d-1} \int_{c-\varepsilon}^{c}\big(s -c + \varepsilon\big)^2\,\, ds = \dfrac{D^{d-1}}{3}\,\varepsilon^3.
\] 
In a similar fashion, 
\[\int_{\Omega_{\varepsilon,2}} \big(\bxi\cdot\bx -c - \varepsilon\big)^2\,d\bx \leq \dfrac{D^{d-1}}{3}\varepsilon^3.\]
The above two inequalities imply
\[
\|\chi - p\|^2_{0,\Omega}
 \leq \left(\dfrac{\alpha_1-\alpha_2}{2\varepsilon}\right)^2 \dfrac{2D^{d-1}}{3}\varepsilon^3 = \dfrac{D^{d-1}(\alpha_1-\alpha_2)^2\varepsilon}{6}.
 \]
This proves the inequality in (\ref{chi-est}) and, hence, the lemma.
\end{proof}

Assume that $u$ is a piece-wise smooth function with respect to the partition $\{\Omega_1,\Omega_2\}$ such that the jump of $u$ on the interface $\cP=\Omega_1\cap\Omega_2$ is a constant $\alpha_2-\alpha_1$, i.e., 
\[
\jump{u}_{_\cP}\equiv u_2|_{_\cP} - u_1|_{_\cP}=\alpha_2-\alpha_1.
\]
Then $u$ has the following decomposition
\begin{equation}\label{decop}
u= \chi(\bx;\bxi,c) + \hat{u},
\end{equation}
where $\bxi$ is a vector normal to ${\bm \beta}$.
It is easy to see that $\hat{u}$ is continuous in $\Omega$ and piece-wise smooth.

\begin{theorem}
Assume that the advection velocity field ${\bm\beta}$ is a constant vector field and that $f\in C(\Omega)$. Let $u$ and $u_{_N}$ be the solutions of problems {\em (\ref{ls})} and {\em (\ref{L-NN})}, respectively. Then we have 
 \begin{equation}\label{tau_1-error}
 \vertiii{u-u_{_N}}_{\bm\beta}
 \leq C\,\left(\big|\alpha_1-\alpha_2\big|\, \sqrt{\varepsilon} + \inf_{v\in \cM({\scriptsize\btheta},L)} \vertiii{\hat{u}-v}_{\bm\beta}
 \right),
 \end{equation}
where $\hat{u}\in C(\Omega)$ is given in {\em (\ref{decop})}.
\end{theorem}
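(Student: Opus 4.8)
The plan is to reduce the theorem to the two approximation facts already established: C\'ea's lemma (Lemma~1) and the explicit two-layer approximation of a single jump (Lemma~2). First I would invoke Lemma~1 to obtain $\vertiii{u-u_{_N}}_{\bm\beta}\le (M/\alpha)^{1/2}\,\inf_{v\in\cM(\btheta,L)}\vertiii{u-v}_{\bm\beta}$, so that everything reduces to exhibiting a single good competitor $v$ in the network class and estimating $\vertiii{u-v}_{\bm\beta}$.

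For the competitor I would use the decomposition $u=\chi(\bx;\bxi,c)+\hat u$ from (\ref{decop}), in which $\bxi$ is normal to $\bm\beta$. I would approximate the piecewise-constant part $\chi$ by the explicit two-layer ReLU function $p$ of Lemma~2, approximate the continuous, piecewise-smooth part $\hat u$ by an arbitrary network $\hat v\in\cM(\btheta,L)$, and take $v=p+\hat v$. The triangle inequality then gives $\vertiii{u-v}_{\bm\beta}\le \vertiii{\chi-p}_{\bm\beta}+\vertiii{\hat u-\hat v}_{\bm\beta}$. The first term is controlled by Lemma~2 after one observation: since $\bxi\perp\bm\beta$, the velocity $\bm\beta$ is a legitimate choice of the normal direction $\bm\eta$ appearing in Lemma~2, so both $\chi$ and $p$ vary only in the $\bxi$-direction and hence $\chi_{\bm\beta}=p_{\bm\beta}=0$. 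Consequently $\vertiii{\chi-p}_{\bm\beta}=\|\chi-p\|_{0,\Omega}$, which Lemma~2 bounds by $\tfrac{1}{\sqrt6}\,D^{(d-1)/2}\,|\alpha_1-\alpha_2|\,\sqrt\varepsilon$. Taking the infimum over $\hat v$ and absorbing $(M/\alpha)^{1/2}$, the factor $\tfrac{1}{\sqrt6}D^{(d-1)/2}$, and the triangle-inequality constant into a single $C$ then yields (\ref{tau_1-error}).

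The step I expect to require the most care is justifying that the competitor $v=p+\hat v$ genuinely lies in $\cM(\btheta,L)$: $p$ is a two-layer network while $\hat v$ has depth $L$, so I must show their sum is realizable within the given architecture. Concretely I would place the two neurons defining $p$ alongside the first hidden layer of $\hat v$ (widening it by two) and carry the two resulting activations---which are nonnegative, being outputs of $\sigma$---through the intermediate layers via ReLU identity channels (ReLU acts as the identity on nonnegative inputs), recombining them in the output layer with weights $\pm(\alpha_2-\alpha_1)/(2\varepsilon)$ and the constant $\alpha_1$. This shows $p+\hat v\in\cM(\btheta,L)$ up to a harmless enlargement of the widths $n_l$, which does not affect the stated estimate. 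The hypotheses that $\bm\beta$ is constant and $f\in C(\Omega)$ enter only to guarantee that the decomposition (\ref{decop}) holds with $\hat u\in C(\Omega)$, making the second infimum meaningful; they are not otherwise used in the bound.
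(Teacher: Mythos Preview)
Your proposal is correct and follows exactly the route the paper intends: the paper's own proof is a one-line remark that the estimate is ``a direct consequence of Lemmas~3.1 and~3.2'' applied to the decomposition~(\ref{decop}), and you have simply unpacked that sentence. Your additional care about why $p+\hat v$ can be realized in $\cM(\btheta,L)$ (by widening each layer by two and passing the nonnegative ReLU outputs through identity channels) fills a gap the paper leaves implicit.
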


\begin{proof}
The assumptions on ${\bm \beta}$ and $f$ imply that the exact solution $u$ has the decomposition in (\ref{decop}). Now, 
(\ref{tau_1-error}) is a direct consequence of Lemmas~3.1 and 3.2.
\end{proof}

Similar to \cite{cai2020deep}, we evaluate the LS functional numerically. To this end, let 
\[
{\cal T}=\{K :\, K\mbox{ is an open subdomain of } \Omega\}
\] 
be a partition of the domain $\Omega$. Then
\[
{\cal E}_{-}=\{E
=\partial K \cap \Gamma_-:\,\, K\in\mathcal{T}\}
\]
is a partition of the inflow boundary $\Gamma_-$.
Let $\bx_{_K}$ and $\bx_{_E}$ be the centroids of $K\in {\cal T}$ and $E\in {\cal E}_-$, respectively. Define the discrete LS functional as follows:
\begin{equation}\label{L-NN-d}
\begin{split}
     \mathcal{L}_{_{\small {\cal T}}}\big(v(\bx; {\small\btheta});{\bf f}\big) 
     = \sum_{K \in {\cal T}} \big(v_{\bm\beta} +\hat{\gamma}\, v-f \big)^2(\bx_{_K}; {\small\btheta})\,|K|+  \sum_{E\in {\cal E}_-} \big(|\bm{\beta} \cdot \bm{n}|(v-g)^2\big)(\bx_{_E}; {\small\btheta})|E|,
\end{split}
\end{equation}
where $|K|$ and $|E|$ are the $d$ and $d-1$ dimensional measures of $K$ and $E$, respectively.
Then the discrete least-squares approximation is to find ${u}^N_{_{\small {\cal T}}}(\bx,{\small\btheta}^*)\in \cM({\small\btheta},L)$ such that
 \begin{equation}\label{discrete_minimization_functional}
  \mathcal{L}_{_{\small {\cal T}}} \big({u}^N_{_{\small {\cal T}}}(\bx,{\small\btheta}^*);{\bf f}\big) 
  = \min\limits_{v\in \cM({\scriptsize\btheta},L)} \mathcal{L}_{_{\small {\cal T}}}\big(v(\bx;{\small\btheta});\,{\bf f}\big)
 = \min_{{\scriptsize \btheta}\in\R^{N}}\mathcal{L}_{_{\small {\cal T}}} \big(v(\bx; {\small\btheta});{\bf f}\big).
\end{equation}

\begin{lemma}
Let $u$, $u_{_N}$, and $u_{_\cT}^N$ be the solutions of problems {\em (\ref{ls})}, {\em (\ref{L-NN})}, and {\em (\ref{discrete_minimization_functional})}, respectively. Then there exists a positive constant $C$ such that
\begin{equation}\label{Cea-L-d}
   \vertiii{u-u^{_N}_{_\cT}}_{\bm\beta}
    \le C\,\left( \inf_{v\in \cM({\scriptsize\btheta},L)} \vertiii{u-v}_{\bm\beta}
    + \big|(\cL-\cL_{_\cT})(u_{_N}-u_{_\cT}^{\small N}; {\bf 0})\big|
    + \big|(\cL-\cL_{_\cT})(u-u_{_N}; {\bf 0})\big|
    \right).
\end{equation}
\end{lemma}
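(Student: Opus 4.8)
The plan is to mimic the proof of the Céa estimate (\ref{Cea-L}) but to absorb the inexact (quadrature) functional through a Strang-type argument; the twist is that $\cM$ is not a linear space, so Galerkin orthogonality is unavailable. The starting point is two algebraic identities. Since $u$ solves (\ref{pde-1}) (so that $f(\bx_{_K})=(u_{\bm\beta}+\hat\gamma u)(\bx_{_K})$ and $g(\bx_{_E})=u(\bx_{_E})$ at the quadrature nodes), for every $v$ one has both the continuous shift relation $\cL(v;{\bf f})=\cL(u-v;{\bf 0})$ and its discrete counterpart $\cL_{_\cT}(v;{\bf f})=\cL_{_\cT}(u-v;{\bf 0})$. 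The first is the identity already used in the proof of (\ref{Cea-L}); the second is the key new ingredient, valid because subtracting the data from the residual at the nodes is exactly subtracting the sampled values of $u$.

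First I would split the error by the triangle inequality, $\vertiii{u-u^{_N}_{_\cT}}_{\bm\beta}\le \vertiii{u-u_{_N}}_{\bm\beta}+\vertiii{u_{_N}-u^{_N}_{_\cT}}_{\bm\beta}$, and dispose of the first term at once by the continuous Céa estimate (\ref{Cea-L}), bounding it by $(M/\alpha)^{1/2}\inf_{v\in\cM}\vertiii{u-v}_{\bm\beta}$. For the second term I would invoke coercivity (\ref{equiv}), $\alpha\,\vertiii{u_{_N}-u^{_N}_{_\cT}}^2_{\bm\beta}\le \cL(u_{_N}-u^{_N}_{_\cT};{\bf 0})$, and then insert the quadrature functional via $\cL(w;{\bf 0})=\cL_{_\cT}(w;{\bf 0})+(\cL-\cL_{_\cT})(w;{\bf 0})$ with $w=u_{_N}-u^{_N}_{_\cT}$. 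The second summand is precisely the target consistency term $(\cL-\cL_{_\cT})(u_{_N}-u^{_N}_{_\cT};{\bf 0})$.

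The crux is to control the remaining discrete term $\cL_{_\cT}(u_{_N}-u^{_N}_{_\cT};{\bf 0})$ without orthogonality. Here I would exploit the genuine least-squares structure: by the discrete shift relation $\cL_{_\cT}(\cdot\,;{\bf f})^{1/2}=\|u-\cdot\|_{_\cT}$, where $\|w\|_{_\cT}:=\cL_{_\cT}(w;{\bf 0})^{1/2}$ is a weighted Euclidean seminorm of the sampled residuals. The triangle inequality for this seminorm, together with the minimality of $u^{_N}_{_\cT}$ over $\cM$ from (\ref{discrete_minimization_functional}) (which gives $\cL_{_\cT}(u^{_N}_{_\cT};{\bf f})\le \cL_{_\cT}(u_{_N};{\bf f})$ because $u_{_N}\in\cM$), yields $\|u_{_N}-u^{_N}_{_\cT}\|_{_\cT}\le \|u-u_{_N}\|_{_\cT}+\|u-u^{_N}_{_\cT}\|_{_\cT}\le 2\,\cL_{_\cT}(u_{_N};{\bf f})^{1/2}$, i.e. $\cL_{_\cT}(u_{_N}-u^{_N}_{_\cT};{\bf 0})\le 4\,\cL_{_\cT}(u_{_N};{\bf f})=4\,\cL_{_\cT}(u-u_{_N};{\bf 0})$, the last step being the discrete shift relation once more. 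This is the step I expect to be the main obstacle, and the point of the seminorm reformulation is that it needs only minimality and the triangle inequality, never linearity of $\cM$.

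Finally I would transfer $\cL_{_\cT}(u-u_{_N};{\bf 0})$ back to the exact functional by $\cL_{_\cT}(u-u_{_N};{\bf 0})=\cL(u-u_{_N};{\bf 0})-(\cL-\cL_{_\cT})(u-u_{_N};{\bf 0})$; the first term is $\le M\inf_{v\in\cM}\vertiii{u-v}^2_{\bm\beta}$ (since $\cL(u-u_{_N};{\bf 0})=\cL(u_{_N};{\bf f})\le\cL(u-v;{\bf 0})\le M\vertiii{u-v}^2_{\bm\beta}$ by minimality of $u_{_N}$ and (\ref{equiv}), then take the infimum), while the second is exactly the third target term $(\cL-\cL_{_\cT})(u-u_{_N};{\bf 0})$. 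Collecting the three contributions and taking square roots delivers the asserted bound. One bookkeeping caveat is worth flagging: because coercivity controls $\vertiii{\cdot}^2_{\bm\beta}$, the two consistency terms naturally enter through their square roots, so matching the first-power form displayed in (\ref{Cea-L-d}) is a matter of how those higher-order quadrature errors are normalized and does not affect the structure of the argument.
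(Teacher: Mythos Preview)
Your argument is correct and follows essentially the same route as the paper: split $u-u^{_N}_{_\cT}$ through $u_{_N}$, use coercivity, the discrete shift identity $\cL_{_\cT}(v;{\bf f})=\cL_{_\cT}(u-v;{\bf 0})$, the triangle inequality for the discrete seminorm together with the minimality $\cL_{_\cT}(u^{_N}_{_\cT};{\bf f})\le\cL_{_\cT}(u_{_N};{\bf f})$, and then shift back to $\cL$ via continuity and Lemma~3.1. Your closing caveat about the consistency terms naturally entering under a square root is a fair observation; the paper's displayed form absorbs this into the unspecified constant and the understanding that these are higher-order quadrature errors.
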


\begin{proof}
By the triangle inequality, the fact that $\cL_{_\cT}(u_{_\cT}^N; {\bf f}) \leq \cL_{_\cT}(u_{_N}; {\bf f})$, and the continuity of the homogeneous functional $\mathcal{L}\big(v;\,{\bf 0}\big)$ in (\ref{equiv}), we have
\begin{eqnarray*}
\dfrac12\,\cL_{_\cT}(u_{_N}-u_{_\cT}^N; {\bf 0})
&\leq & \cL_{_\cT}(u_{_N}-u; {\bf 0}) + \cL_{_\cT}(u-u_{_\cT}^N; {\bf 0})
= \cL_{_\cT}(u_{_N}; {\bf f}) + \cL_{_\cT}(u_{_\cT}^N; {\bf f})\\[2mm]
&\leq & 2\, \cL_{_\cT}(u_{_N}; {\bf f})
= 2\,\big((\cL_{_\cT}-\cL)(u_{_N}-u; {\bf 0})
+\cL(u_{_N}-u; {\bf 0})\big)
\\[2mm]
&\leq & 2\,(\cL_{_\cT}-\cL)(u_{_N}-u; {\bf 0})
+2M\,\vertiii{u-u_{_N}}^2_{\bm\beta} ,
\end{eqnarray*}
which, together with the coercivity of the homogeneous functional $\mathcal{L}\big(v;\,{\bf 0}\big)$ in (\ref{equiv}), implies that
\begin{eqnarray*}
\alpha\, \vertiii{u_{_N}-u_{_\cT}^{N}}^2_{\bm\beta}
&\le &  \mathcal{L}\big(u_{_N}-u_{_\cT}^{N};\,{\bf 0}\big)
= \big(\mathcal{L}-\cL_{_\cT}\big)\big(u_{_N}-u_{_\cT}^N;\,{\bf 0}\big) +\mathcal{L}_{_\cT}\big(u_{_N}-u_{_\cT}^N;\,{\bf 0}\big)\\[2mm]
&\le & \big(\mathcal{L}-\cL_{_\cT}\big)\big(u_{_N}-u_{_\cT}^N;\,{\bf 0}\big)
+ 4\,(\cL_{_\cT}-\cL)(u_{_N}-u; {\bf 0})
+4M\,\vertiii{u-u_{_N}}^2_{\bm\beta}.
\end{eqnarray*}
Now, (\ref{Cea-L-d}) is a direct consequence of the triangle inequality, the above inequality, and Lemma~3.1. This completes the proof of the lemma.
\end{proof}

Lemma 3.4 indicates that the total error of the LSNN approximation with numerical integration is bounded by the approximation error of the neural network and the error of the numerical integration. 

\section{Initialization of two-layer neural network}  

The nonlinear optimization in (\ref{L-NN-d}) usually has many solutions, and the desired one is obtained only if we start from a close enough first approximation. In this section, we briefly describe the initialization process introduced in \cite{LiuCai} for two-layer neural network. 

To this end, a two-layer ReLU NN with $n_1$ neurons produces the following set of functions:
\begin{equation}\label{ReLU-n}
 {\cal M}({\small\btheta},2) = \left\{c_0+\sum_{i=1}^{n_1} c_i\sigma(\bomega_i\cdot \bx -b_i)\, :\,  
c_i,\, b_i\in \R,\,\, \bomega_i\in \cS^{d-1} \right\},
 \end{equation}
where $\cS^{d-1}$ is the unit sphere in $\R^d$.
Let
\[
\varphi_{0}(\bx)=1
\quad\mbox{and}\quad 
\varphi_{i}(\bx)=\sigma(\bomega_i\cdot \bx -b_i)
\quad\mbox{for } i=1,...,n_1.
 \]
For a given input weights and bias
\[
\bomega=(\bomega_1, ... , \bomega_{n_1})
 \quad\mbox{and}\quad 
 {\bf b}=(b_1, ..., b_{n_1}),
 \]
problem (\ref{vp}) may be approximated by finding $u_{n_1}=\sum\limits^{n_1}_{i=0}c_i\varphi_{i}(\bx)$ such that
\begin{equation}\label{c}
    a(u_{n_1},\varphi_{j}) =f(\varphi_{j})
    \quad\mbox{for } j=0,1,...,n_1.
\end{equation}
for $j=0,1,...,n_1$. Let 
\[
A(\bomega,\bb)=\left(a(\varphi_{j}, \varphi_{i}) \right)_{(n_1+1)\times (n_1+1)}
\quad\mbox{and}\quad
F(\bomega,\bb)=\left(f(\varphi_{j}) \right)_{(n_1+1)\times 1},
\]
then the coefficients, ${\bf c}=(c_0,c_1, ..., c_n)$, of $u_{n_1}$ is the solution of the system of linear algebraic equations
\begin{equation}\label{linear_system}
    A(\bomega,\bb)\, \bc = F(\bomega,\bb).
\end{equation}

\begin{lemma}
Assume that hyper-planes $\{\bomega_i\cdot \bx =b_i\}_{i=1}^{n_1}$ are distinct. Then
the coefficient matrix $A(\bomega,\bb)$ is symmetric, positive definite.
\end{lemma}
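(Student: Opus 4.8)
The plan is to handle the three assertions in order, doing the real work only for definiteness. Symmetry is immediate: the bilinear form $a(\cdot,\cdot)$ is the sum of the symmetric $L^2(\Omega)$ inner product of $(\cdot)_{\bm\beta}+\hat{\gamma}(\cdot)$ with itself and the symmetric weighted boundary inner product $\langle\cdot,\cdot\rangle_{-\bm\beta}$, so $a(\varphi_j,\varphi_i)=a(\varphi_i,\varphi_j)$ and $A(\bomega,\bb)$ is symmetric. For the quadratic form, given $\bc=(c_0,\dots,c_{n_1})^T$ I would set $v=\sum_{i=0}^{n_1}c_i\varphi_i\in\cM(\btheta,2)\subset V_{\bm\beta}$, so that $\bc^T A(\bomega,\bb)\,\bc=a(v,v)=\mathcal{L}(v;{\bf 0})$; the coercivity in (\ref{equiv}) then gives $\bc^T A\,\bc\ge\alpha\,\vertiii{v}_{\bm\beta}^2\ge0$, so $A$ is at least positive semidefinite. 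Positive definiteness therefore reduces to showing $\bc^T A\,\bc=0\Rightarrow\bc=0$: using (\ref{equiv}) once more, $\bc^T A\,\bc=0$ forces $\vertiii{v}_{\bm\beta}=0$, hence $v\equiv0$ on $\Omega$, and the whole matter collapses to the \emph{linear independence} of $\{\varphi_0,\varphi_1,\dots,\varphi_{n_1}\}$ as functions on $\Omega$.

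The linear independence is where the distinctness hypothesis enters, and it is the main obstacle. My approach is to exploit the kink of each ReLU ridge. The function $\varphi_i=\sigma(\bomega_i\cdot\bx-b_i)$ is affine off the hyperplane $H_i:\bomega_i\cdot\bx=b_i$, with gradient $\bomega_i$ on the side $\bomega_i\cdot\bx>b_i$ and gradient $\mathbf{0}$ on the other side, so its gradient jumps by $\bomega_i$ across $H_i$, whereas $\varphi_0\equiv1$ is smooth everywhere. Assume $v=\sum_{i=0}^{n_1}c_i\varphi_i\equiv0$. Fixing $k\in\{1,\dots,n_1\}$, I would choose an interior point $\bx_0\in H_k\cap\Omega$ lying on no other hyperplane $H_j$, $j\neq k$, and pass to a small ball around $\bx_0$ meeting only $H_k$ among the hyperplanes. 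On that ball every term other than $c_k\varphi_k$ is affine and $\varphi_0$ is constant, so the gradient jump of $v$ across $H_k$ equals $c_k\bomega_k$; since $v\equiv0$ is smooth, this jump vanishes, and because $\bomega_k\in\cS^{d-1}$ is a unit (hence nonzero) vector, $c_k=0$. Repeating this for $k=1,\dots,n_1$ annihilates every $c_k$, whereupon $v=c_0\varphi_0\equiv c_0=0$ forces $c_0=0$, giving $\bc=0$.

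The step I expect to require the most care is the geometric one underlying this argument, namely the selection, for each $k$, of an interior test point on $H_k$ that avoids the finitely many other hyperplanes. This is precisely the place where the hypothesis that the $H_i$ are distinct (each cutting $\Omega$ in its own trace) is used: distinctness guarantees $H_k\cap\Omega$ is not covered by $\bigcup_{j\neq k}H_j$, so such a point exists. Once that fact is in hand, the identification of the gradient jump and the conclusion $c_k=0$ are routine, and definiteness follows. I would close by noting that symmetric positive definiteness of $A(\bomega,\bb)$ in particular makes the linear system (\ref{linear_system}) uniquely solvable.
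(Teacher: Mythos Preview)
Your proposal is correct and follows the same overall strategy as the paper: symmetry is immediate, and positive definiteness is obtained by combining the coercivity bound in (\ref{equiv}) with the linear independence of $\{\varphi_i\}_{i=0}^{n_1}$. The paper's own proof simply invokes these two ingredients and cites an external result (Lemma~2.1 of \cite{LiuCai}) for the linear independence, whereas you supply a self-contained argument via the gradient-jump of each ReLU ridge across its own hyperplane. That extra work is sound---the distinctness hypothesis is exactly what guarantees, for each $k$, a test point in $H_k\cap\Omega$ lying off the remaining hyperplanes, so that the jump of $\nabla v$ across $H_k$ isolates $c_k\bomega_k$. Your version is therefore more informative than the paper's, at the cost of a short geometric verification; the paper's version is terser but depends on the cited lemma. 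Both reach the same conclusion by the same mechanism.
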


\begin{proof}
Obviously, $A(\bomega,\bb)$ is symmetric. Positive definiteness of $A(\bomega,\bb)$ follows from the lower bound in (\ref{equiv}) and the linear independence of $\{\varphi_{i}\}^{n_1}_{i=0}$ (see Lemma 2.1 of \cite{LiuCai}).
\end{proof}

As discussed in \cite{LiuCai}, the (breaking) hyper-planes
\[
\cP_i:\, \bomega_i\cdot \bx -b_i=0
\quad\mbox{for } i=1,...,n_1
\]
and the boundary of the domain $\Omega$ form a physical partition of the domain $\Omega$. It is then natural to initialize the input weights $\bomega$ and bias $\bb$ such that the corresponding hyper-planes $\{\cP_i\}^{n_1}_{i=1}$ form a uniform partition of the domain $\Omega$. The initial for the output weights and bias $\bc$ may be chosen to be the solution of problem (\ref{linear_system}).

\section{Numerical Experiments}

In this section, we present numerical results for test problems with constant, piece-wise constant, or variable advection velocity fields. The solutions of these test problems are discontinuous along an interface which is a line segment, a piece-wise line segment, or a curve.

In all experiments, 
the integration mesh $\cT$ is obtained by uniformly partitioning the domain $\Omega$ into identical squares with mesh size $h=10^{-2}$. The directional derivative in the least-squares functional is approximated by the backward finite difference quotient
\begin{equation}\label{finite_diff}
    v_{\bm\beta}(\bx_{_K}) \approx \frac{v(\bx_{_K})-v\big(\bx_{_K} - \rho\bar{\bm{\beta}}(\bx_{_K}\big))}{\rho}
\end{equation}
where $\rho\in\R$ is chosen to be smaller than the integration mesh size $h$, and $\bar{\bm{\beta}}$ is the unit vector in the ${\bm\beta}$ direction. The minimization problem in  (\ref{L-NN-d}) is solved numerically by the Adam version of gradient descent \cite{kingma2015}, and variable learning rate is used during the training.

Let $u$ be the exact solution of problem (\ref{pde}) and $\bar{u}^N_{_\cT}$ be the LSNN approximation. 
Tables \ref{test1_1_table}--\ref{test3_table} report the numerical errors in the relative $L^2$, $V_{\bm\beta}$, and graph norms. In these tables, a network structure is expressed by 2-$n$-1 for a two-layer network with $n$ neurons, by 2-$n_1$-$n_2$-1 for a three-layer network with $n_1$ and $n_2$ neurons in the respective first and second layers, and so on. 
Figures \ref{test1_1_figure}--\ref{test3_figure} depict the traces of the exact solution and the numerical approximation on a plane perpendicular to both the $x_1x_2$-plane and the discontinuous interface, which accurately illustrate the quality of the numerical approximation.

\subsection{Problems with a constant advection velocity fields}

In this section, we present numerical results for two test problems with constant advection velocity fields whose solutions are piece-wise constants (see, e.g., \cite{liu2020adaptive}). 
A two-layer neural network is employed and the network is initialized by the method described in section 4.

\subsubsection{Discontinuity along a vertical line segment}

The first test problem is the equation in (\ref{pde}) with  the domain $\Omega =(0,2) \times (0,1)$, the inflow boundary $\Gamma_- = \{(x,0):\, x\in (0,2)\} $, a constant advection velocity field $\bm{\beta} = (0,1)^T$, $\gamma =f=0$, and the inflow boundary data $g(x)=0$ for $x\in (0,\pi/3)$ and $g(x)=1$ for $x\in (\pi/3,2)$. Let $\Omega_1=\{(x,y)\in \Omega:\, 0<x<\pi/3\}$ and $\Omega_2=\{(x,y)\in \Omega:\,\pi/3<x<2\}$, it is then easy to see that the exact solution is a piece-wise constant given by
\[ 
u(x,y)=\left\{ \begin{array}{ll}
 0, & (x,y)\in \Omega_1, \\[2mm]
 1, & (x,y)\in \Omega_2.
 \end{array}\right.
 \] 
The discontinuous interface is the vertical line $x=\pi/3$.

This problem was used to test various adaptive least-squares finite element methods in \cite{liu2020adaptive}. In particular, the discontinuous interface $x=\pi/3$ was chosen so that if the initial mesh does not align with the interface, so is the mesh generated by either global or local mesh refinements. 

Numerical results in \cite{liu2020adaptive} (see Fig. \ref{test1_1_compare_figure})  
showed that the conforming least-squares finite element method (C-LSFEM) exhibits the Gibbs phenomena even with very fine mesh and that the newly developed flux-based LSFEM in \cite{liu2020adaptive} using a pair of the lowest-order elements is able to avoid overshooting on an adaptively refined mesh.

\begin{figure}[htbp]\label{test1_1_compare_figure}
\centering
\subfigure[C-LSFEM vertical cross section]{
\begin{minipage}[t]{0.33\linewidth}
\centering
\includegraphics[width=1.6in]{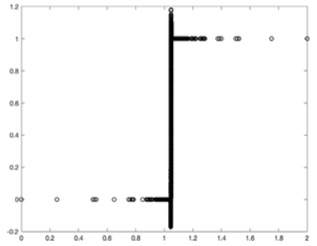}
\end{minipage}%
}%
\subfigure[flux-based LSFEM vertical cross section]{
\begin{minipage}[t]{0.33\linewidth}
\centering
\includegraphics[width=1.58in]{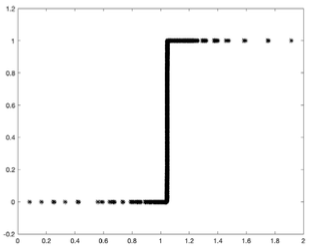}
\end{minipage}%
}%
\subfigure[An adaptively refined mesh]{
\begin{minipage}[t]{0.3\linewidth}
\centering
\includegraphics[width=1.6in]{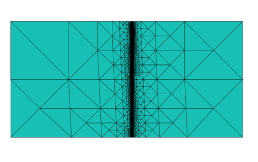}
\end{minipage}%
}%
\caption{Numerical results in \cite{liu2020adaptive} of the problem with discontinuity along a vertical line segment}
\end{figure}

The LSNN method is implemented with $\rho=h/2$ and a fixed learning rate $0.003$ with 20000 iterations. Our first set of experiments are done by using networks: 2-200-1 and 2-25-15-15-1. 
These two networks have $601$ and $705$ parameters, respectively, and provide good approximations (similar to Fig. \ref{test1_1_figure}(a,b)) to the exact solution.  

Lemma~3.2 indicates that a two-layer network with $2$ neurons is sufficient to approximate the exact solution well. Our second set of experiments are done by using networks: 2-2-1 and 2-4-1 with the respective $7$ and $13$ parameters. The 2-2-1 network fails to approximate the exact solution when the initial breaking lines are chosen to be the vertical line $x=1$ and the horizontal line $y=1/2$. This is because the iterative solver of the nonlinear optimization is not able to move the initial horizontal breaking line to the right place. The initial breaking lines for the 2-4-1 network are chosen to be the vertical lines $x=2/3$ and $x=4/3$ and the horizontal lines $y=1/3$ and $y=2/3$.

 \begin{table}[ht]
\caption{Relative errors of the problem with discontinuity along a vertical line segment}
\vspace{5pt}
\begin{tabular}{|l|l|l|l|l|}
\hline
Network structure  &$\frac{\|u-\bar{u}^N_{_\cT}\|_0}{\|u\|_0}$ &$\frac{\vertiii{u-\bar{u}^N_{_\cT}}_{\bm\beta}}{\vertiii{u}_{\bm\beta}}$ & $\frac{\mathcal{L}^{1/2}(\bar{u}^N_{_\cT};\bf f)}{\mathcal{L}^{1/2}(\bar{u}^N_{_\cT};\bf 0)}$ & Parameters \\ \hline
2-4-1  &0.058046&0.058304 & 0.050491   & 13\\ \hline
2-200-1 &0.058745& 0.058926 &0.048537   & 601\\ \hline
\end{tabular}
\centering
\label{test1_1_table}
\end{table}

\begin{figure}[htbp]
\centering
\subfigure[Numerical solution $\bar{u}^N_{_\cT}$]{
\begin{minipage}[t]{0.31\linewidth}
\includegraphics[width=1.8in]{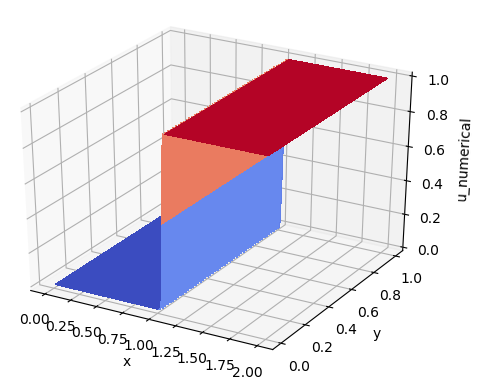}
\end{minipage}%
}%
\subfigure[Vertical cross section on $y=1$]{
\begin{minipage}[t]{0.32\linewidth}
\centering
\includegraphics[width=1.8in]{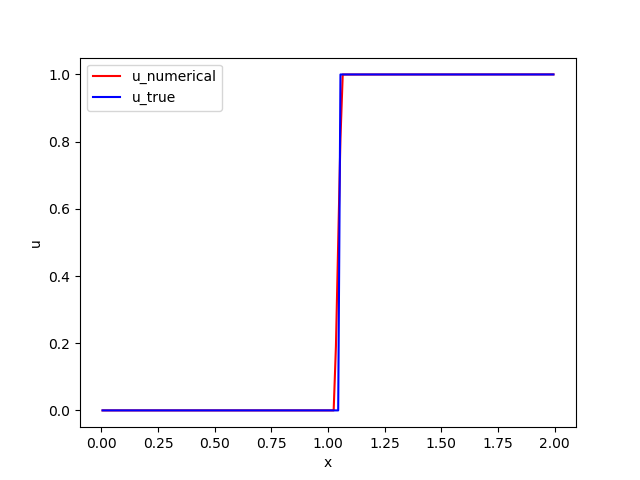}
\end{minipage}%
}%
\subfigure[Network breaking lines]{
\begin{minipage}[t]{0.32\linewidth}
\centering
\includegraphics[width=1.8in]{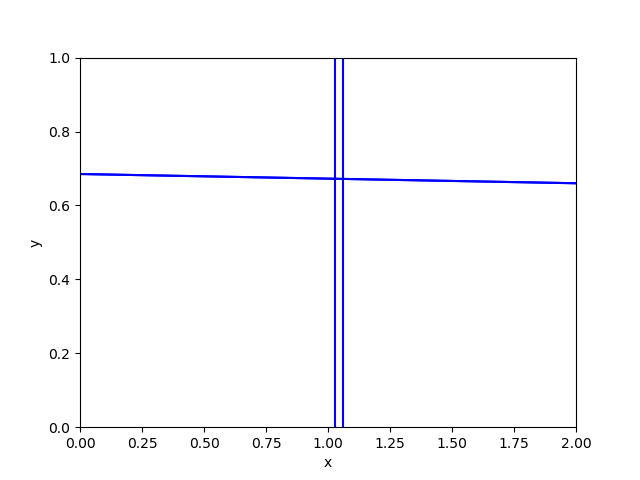}
\end{minipage}%
}%
\caption{Approximation results of the problem with discontinuity along a vertical line segment}\label{test1_1_figure}
\end{figure}

Errors of numerical results are presented in Table \ref{test1_1_table}. The second and third columns in Table \ref{test1_1_table} show that the approximation of the small network is slightly more accurate than that of the large network while the values of the loss functions are reversed. This indicates that the large network is trapped in a local minimum.
The numerical solution of the $4$-neuron network is depicted in Fig. \ref{test1_1_figure}(a). The traces of the exact and numerical solutions on the plane $y=1$ are depicted in Fig. \ref{test1_1_figure}(b), which shows no oscillation. Fig. \ref{test1_1_figure}(c) displays breaking lines of the network with two vertical lines $x=1.02882$ and $x=1.06114$ closing to the interface $x=\pi/3$. This indicates that breaking lines of neural network are capable of automatically adapting to the discontinuous interface. This simple test problem shows that the LSNN method out-performs the traditional mesh-based numerical methods.

\subsubsection{Discontinuity along the diagonal}

The second test problem is again equation (\ref{pde}) with a constant advection velocity vector and a piece-wise constant inflow boundary condition. Specifically, $\bm{\beta} = (1,1)^T/\sqrt2$, 
$\Omega =(-1,1)^2$, $\Gamma_-=\Gamma_-^1\cup \Gamma_-^2\equiv \{(-1,y):\, y \in (-1,1)\} \cup \{(x,-1):\, x \in (-1,1)\}$, $\gamma =1$, $g$ and $f$ are piece-wise constants given by
 \[
 g(x,y)=\left\{ \begin{array}{ll}
 1, & (x,y)\in \Gamma^1_-, \\[2mm]
 0, & (x,y)\in \Gamma^2_-,
 \end{array}\right.
 \quad\mbox{and}\quad 
 f(x,y) = \left\{ \begin{array}{ll}
 1, & (x,y)\in \Omega_1, \\[2mm]
 0, & (x,y)\in \Omega_2,
 \end{array}\right.
 \]
where $\Omega_1=\{(x,y)\in\Omega:\, y>x\}$ and $\Omega_2=\{(x,y)\in\Omega:\, y<x\}$. The exact solution of the test problem is $u(x,y) =f(x,y)$
with the discontinuous interface: $y=x$.
 
The LSNN method is implemented with $\rho=h/2$ and a fixed learning rate 0.003 with 20000 iterations for two networks: 2-4-1 and 2-6-1. The numerical results are presented in Table \ref{test1_2_table} which imply that the 2-4-1 network fails to accurately approximate the solution. Figure \ref{test1_2_figure} shows the NN approximation of the 2-6-1 network. The traces of the exact and numerical solutions on the plane $y=-x$ are depicted in Fig. \ref{test1_2_figure}(b). Clearly, the LSNN method with only 19 parameters approximates the exact solution accurately without the Gibbs phenomena. This test problem shows that the LSNN method is able to rotate and shift the initial breaking lines to approximate the discontinuous interface.
 
\begin{table}[htbp]
\centering
\caption{Relative errors of the problem with discontinuity along the diagonal}
\vspace{5pt}
\begin{tabular}{|l|l|l|l|l|}
\hline
Network structure  &$\frac{\|u-\bar{u}^N_{_\cT}\|_0}{\|u\|_0}$ &$\frac{\vertiii{u-\bar{u}^N_{_\cT}}_{\bm\beta}}{\vertiii{u}_{\bm\beta}}$ & $\frac{\mathcal{L}^{1/2}(\bar{u}^N_{_\cT};\bf f)}{\mathcal{L}^{1/2}(\bar{u}^N_{_\cT};\bf 0)}$ & Parameters \\ \hline
2-4-1 & 0.393864 & 0.393871 & 0.126095 & 13 \\ \hline
2-6-1 & 0.073534 &0.073826&0.067531   &19 \\ \hline
\end{tabular}
\centering
\label{test1_2_table}
\end{table}

\begin{figure}[htbp]
\centering
\subfigure[Network approximation $\bar{u}^N_{_\cT}$]{
\begin{minipage}[t]{0.31\linewidth}
\includegraphics[width=1.82in]{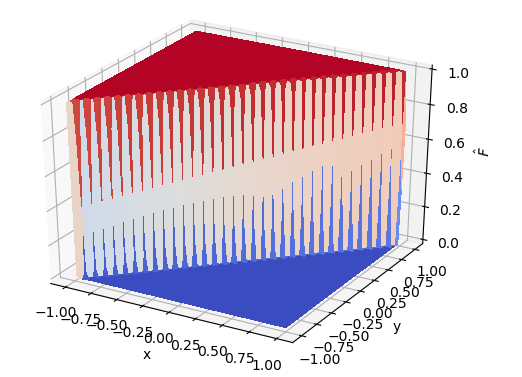}
\end{minipage}%
}%
\subfigure[Vertical cross section along $y=-x$
]{
\begin{minipage}[t]{0.3\linewidth}
\centering
\includegraphics[width=1.73in]{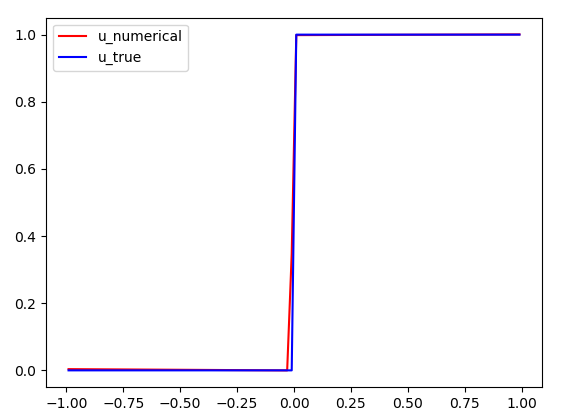}
\end{minipage}%
}%
\subfigure[Network breaking lines]{
\begin{minipage}[t]{0.3\linewidth}
\centering
\includegraphics[width=1.45in]{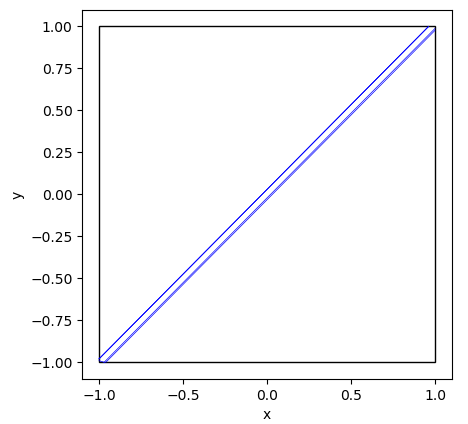}
\end{minipage}%
}%
\caption{Approximation results of the problem with discontinuity along the diagonal}\label{test1_2_figure}
\end{figure}

\subsection{Problem with a piecewise smooth solution}
The third test problem is a modification of the second test problem by changing the inflow boundary condition from the piece-wise constant to a discontinuous piece-wise smooth function and the domain from $\Omega=(-1,1)^2$ to $\Omega=(0,1)^2$, i.e., 
 \[
 g(x,y)=\left\{ \begin{array}{ll}
 \sin(y), & (x,y)\in \Gamma^1_-=\{(0,y):\, y \in (0,1)\} , \\[2mm]
 \cos(x), & (x,y)\in \Gamma^2_-=\{(x,0):\, x \in (0,1)\}.
 \end{array}\right.\]
Set $\gamma =f=0$, the exact solution of this test problem is
\[ 
u(x,y) = \left\{ \begin{array}{ll}
 \sin(y-x), & (x,y)\in \Omega_1=\{(x,y)\in (0,1)^2:\, y>x\}, \\[2mm]
 \cos(x-y), & (x,y)\in \Omega_2=\{(x,y)\in (0,1)^2:\, y<x\}.
 \end{array}\right.
 \] 
 
The LSNN method is employed with $\rho=h/2$ and a fixed learning rate 0.003 for 30000 iterations. Numerical results of three network models are reported in Table \ref{test_smooth_table} and the first two models fail to approximate the solution well. Figure \ref{test_smooth_figure} presents the NN approximation of the 2-40-1 network. The traces of the exact and numerical solutions on the plane $y=1-x$ are depicted in Fig. \ref{test_smooth_figure}(b), which exhibits no oscillation.
It is expected that the network with additional neurons is needed in order to approximate the solution well since the solution of the test problem is a piece-wise smooth function. Moreover, this test problem conforms Theorem~3.3 that a piece-wise smooth function having a constant jump along a line segment discontinuous interface may be approximated well by a two-layer network.

 \begin{table}[htbp]
\centering
\caption{Relative errors of the problem with a piece-wise smooth solution}
\vspace{5pt}
\begin{tabular}{|l|l|l|l|l|}
\hline
Network structure  &$\frac{\|u-\bar{u}^N_{_\cT}\|_0}{\|u\|_0}$ &$\frac{\vertiii{u-\bar{u}^N_{_\cT}}_{\bm\beta}}{\vertiii{u}_{\bm\beta}}$ & $\frac{\mathcal{L}^{1/2}(\bar{u}^N_{_\cT};\bf f)}{\mathcal{L}^{1/2}(\bar{u}^N_{_\cT};\bf 0)}$ & Parameters\\ \hline
2-20-1 & 0.110745  &0.110754 & 0.035571 & 61 \\ \hline
2-30-1 & 0.107525 & 0.107641 & 0.013568 &91 \\ \hline
2-40-1 &0.101411  & 0.101413 &0.003509 & 121 \\ \hline
\end{tabular}
\centering
\label{test_smooth_table}
\end{table}

\begin{figure}[htbp]
\centering
\subfigure[Network approximation $\bar{u}^N_{_\cT}$]{
\begin{minipage}[t]{0.31\linewidth}
\includegraphics[width=1.82in]{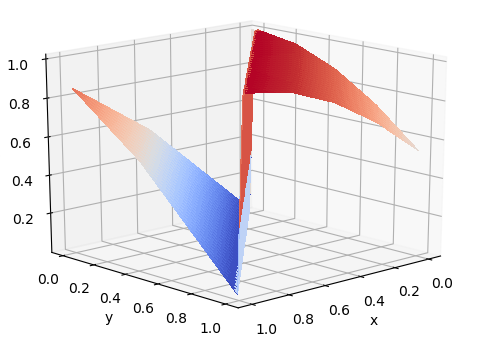}
\end{minipage}%
}%
\subfigure[Vertical cross section along $y=1-x$]{
\begin{minipage}[t]{0.299\linewidth}
\centering
\includegraphics[width=1.7in]{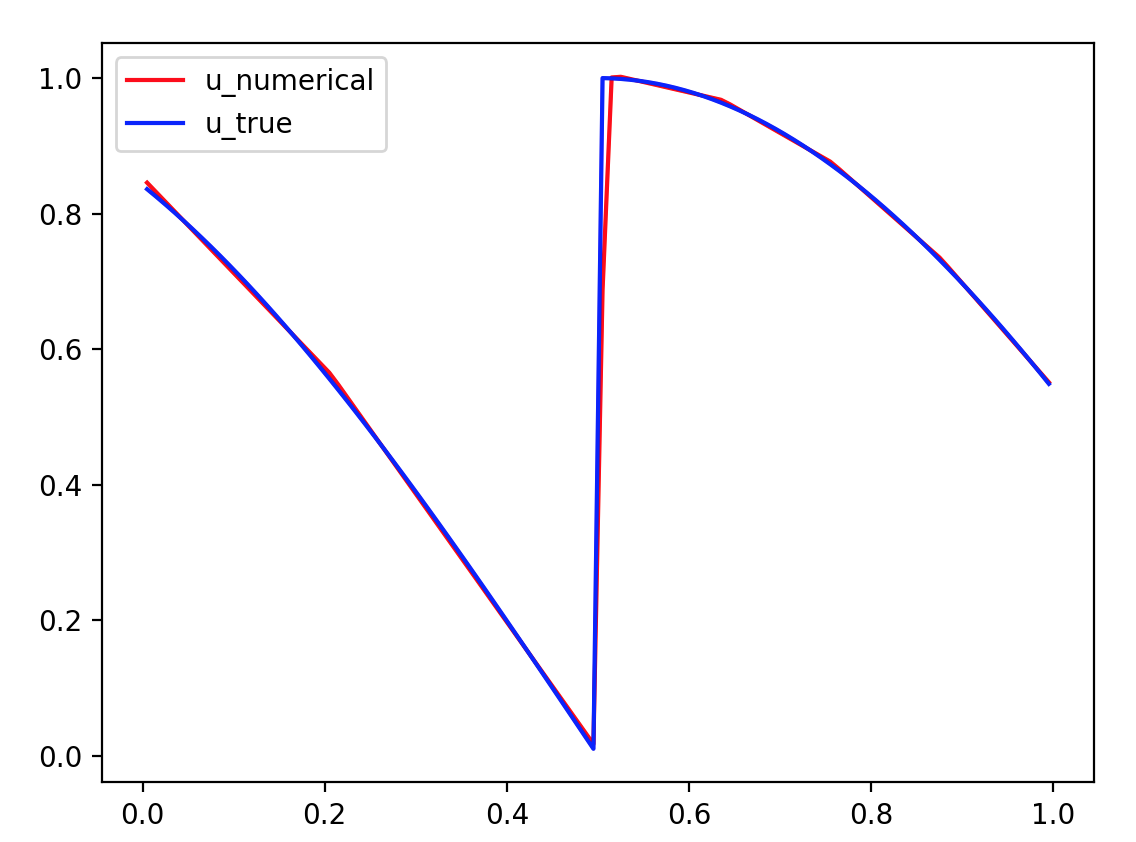}
\end{minipage}%
}%
\subfigure[Network breaking lines]{
\begin{minipage}[t]{0.3\linewidth}
\centering
\includegraphics[width=1.38in]{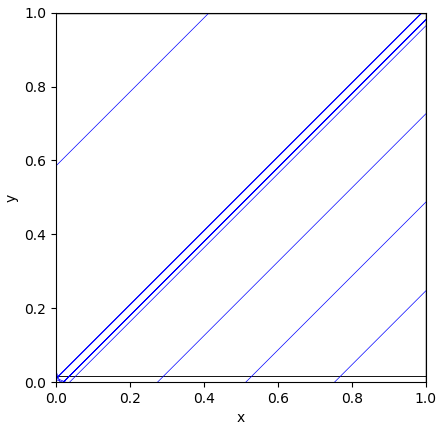}
\end{minipage}%
}%
\caption{Approximation results of the problem with a piece-wise smooth solution}\label{test_smooth_figure}
\end{figure}

\subsection{Problem with two discontinuous interfaces}
The fourth test problem is again a modification of the second test problem by changing the domain to $\Omega=(-1,1)\times(0,1)$, the inflow boundary condition to a combination of jumps and smooth function
\[
 g(x,y)=\left\{ \begin{array}{ll}
 \sin\left(\frac{\pi(x-y+0.9)}{0.3}\right), & (x,y)\in \Gamma^1_-=\{(x,0):\, x \in (-0.9,-0.6)\} , \\[2mm]
  -1, & (x,y)\in \Gamma^2_-=\{(x,0):\, x \in (-0.2,0.1)\} , \\[2mm]
 0, & (x,y)\in \Gamma_- \setminus (\Gamma^1_-\cup \Gamma^2_-)
 \end{array}\right.\]
 with the inflow boundary 
 \[\Gamma_- = \{(x,0):\, x \in (-1,1)\} \cup \{(-1,0)\} \cup \{(-1,y):\, y \in (0,1)\}.
 \]
Set $f$ as 
\[
 f(x,y)=\left\{ \begin{array}{ll}
 \sin\left(\frac{\pi(x-y+0.9)}{0.3}\right), & (x,y)\in \Upsilon_1=\{(x,y)\in \Omega:\, -0.9<x-y <-0.6\} , \\[2mm]
  -1, & (x,y)\in \Upsilon_2=\{(x,y)\in \Omega:\, -0.2<x-y <0.1\} , \\[2mm]
 0, & (x,y)\in \Omega \setminus (\Upsilon_1\cup \Upsilon_2),
 \end{array}\right.\]
  then the exact solution of the test problem is $u(x,y)=f(x,y)$ with two discontinuous interfaces $y=x+0.2$ and $y=x-0.1$, respectively.

   The LSNN method is implemented with $\rho= h/2$ and an adaptive learning rate which starts with 0.01 and decreases by 0.002 for every 20000 iterations. The total number of iterations is 80000. We observed from the experiment that adding a weight $\alpha$ to the inflow boundary loss in (\ref{L-NN-d}) is helpful for the training. Empirically, we choose $\alpha =10$ and report the numerical results for three respective network structures in Table \ref{test_smooth1_table}. The results suggest that the first 2-20-1 network model fails to approximate the solution well due to the possibility of training and/or insufficient number of neurons. Starting with a 2-30-1 network and applying the adaptive neuron enhancement strategy \cite{LiuCai} once, the 2-34-1 network provides an accurate approximation (see Table \ref{test_smooth1_table} and Figure \ref{test_smooth1_figure}). The traces of the exact and numerical solutions are depicted on the plane $y=0.8$ in Figure \ref{test_smooth1_figure}(c). 
   This test problem shows that the LSNN method using a small number of DoF is capable of approximating a discontinuous solution containing a smooth extrema without oscillations.

\begin{table}[htbp]
\centering
\caption{Relative errors of the problem with two discontinuous interfaces}
\vspace{5pt}
\begin{tabular}{|l|l|l|l|l|}
\hline
Network structure  &$\frac{\|u-\bar{u}^N_{_\cT}\|_0}{\|u\|_0}$ &$\frac{\vertiii{u-\bar{u}^N_{_\cT}}_{\bm\beta}}{\vertiii{u}_{\bm\beta}}$ & $\frac{\mathcal{L}^{1/2}(\bar{u}^N_{_\cT};\bf f)}{\mathcal{L}^{1/2}(\bar{u}^N_{_\cT};\bf 0)}$ & Parameters\\ \hline
2-20-1 & 0.363573 & 0.392153 & 0.393907 &61 \\ \hline
2-30-1 &0.147767  & 0.152132 & 0.132542 &91 \\ \hline
2-34-1 &0.117451  &0.120213  &0.112463  &103 \\ \hline
\end{tabular}
\centering
\label{test_smooth1_table}
\end{table}

\begin{figure}[htbp]
  \centering 
    \subfigure[Exact solution $u$]{ 
    \includegraphics[width=2.2in]{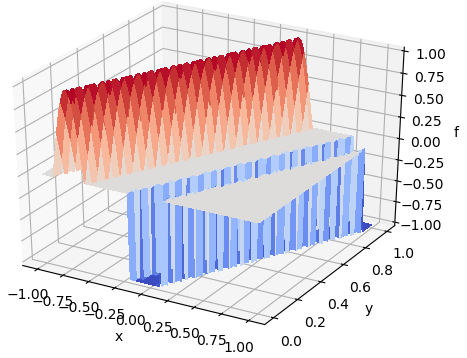}}
    \hspace{0.4in} 
    \subfigure[Network approximation $\bar{u}^N_{_\cT}$]{ 
    \includegraphics[width=2.2in]{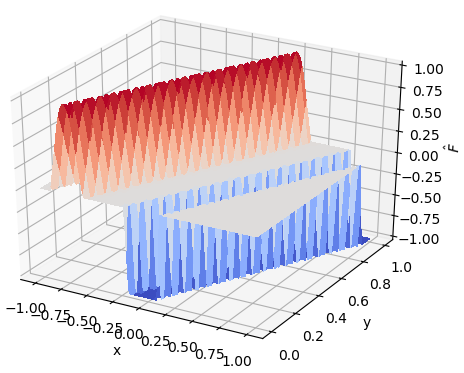}} 
\\
  \subfigure[Traces of the exact solution and approximation $\bar{u}^N_{_\cT}$ on the plane $y=0.8$]{ 
    \includegraphics[width=2.1in]{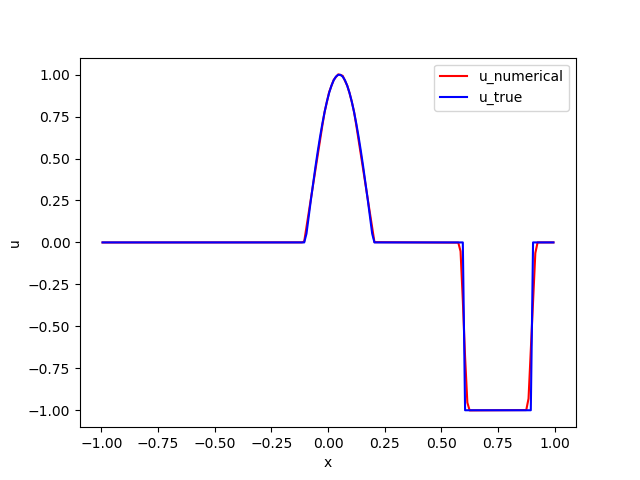}}
     \hspace{0.2in} 
\subfigure[Network breaking lines]{ 
    \includegraphics[width=2.4in]{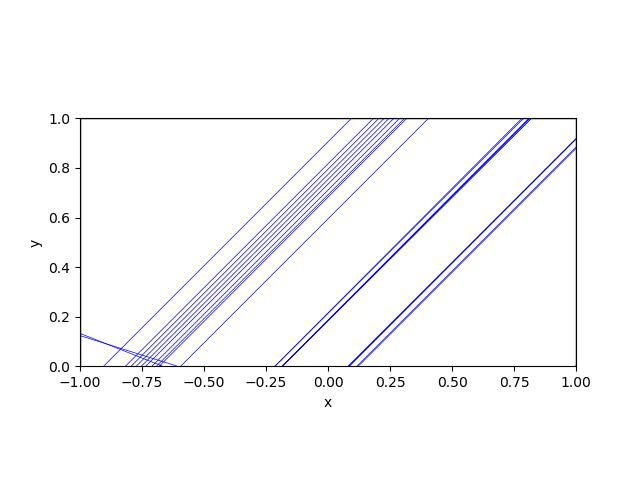}}
  \caption{Approximation results of the problem with two discontinuous interfaces}
  \label{test_smooth1_figure}
\end{figure}

\subsection{Problem with a piece-wise constant advection velocity field}

The fifth test problem is equation (\ref{pde}) defined on $\Omega =(0,1)^2$ with $\gamma =f =0$ and a piece-wise constant advection velocity field. Specifically, the advection velocity field is given by 
\begin{equation}\label{test_line_beta}
\bm{\beta} =\left\{ \begin{array}{rclll}
&(1-\sqrt2,1)^T, & (x,y)\in \Upsilon_1=\{(x,y)\in\Omega:\, y<x\}, \\[2mm]
&(-1,\sqrt2-1)^T, & (x,y)\in \Upsilon_2=\{(x,y)\in\Omega:\, y\ge x\}.
 \end{array}\right.
\end{equation}
and, hence, the inflow boundary of the problem is \begin{equation}\label{inflow-B}
\Gamma_-=\{(x,0):\, x\in (0,1)\} \cup \{(1,0)\}\cup \{(1,y):\, y\in (0,1)\}.
\end{equation}
Let $\Gamma^1_-=\{(x,0): x\in (0, a)\}$ with $a=43/64$. For the inflow boundary condition
\begin{equation}\label{test3_g}
g(x,y)=\left\{ \begin{array}{rl}
 -1,& (x,y)\in \Gamma^1_-, \\[2mm]
 1, & (x,y)\in \Gamma^2_-=\Gamma_-\setminus \Gamma_-^1,
 \end{array}\right.
 \end{equation}
the exact solution is a piece-wise constant: $u=-1$ in $\Omega_1$ and $u=1$ in $\Omega_2$, where $\Omega_2=\Omega\setminus\bar{\Omega}_1$ and 
 \[
 \Omega_1=\{\bx\in\Upsilon_1: \bxi_1 \cdot \bx < a\} \cup \{\bx\in\Upsilon_2: \bxi_2 \cdot \bx < a\}.
\]
Here, $\bxi_1=(1, \sqrt{2}-1)^T$ and $\bxi_2=(\sqrt{2}-1, 1)^T$ are vectors normal to 
$\bm{\beta}|_{_{\Upsilon_1}}$ and $\bm{\beta}|_{_{\Upsilon_2}}$, respectively. 

The LSNN method with $\rho = h/2$ and a fixed learning rate 0.003 with 50000 iterations is implemented for networks: 2-30-1, 2-200-1, and 2-5-5-1. Initialization of the first layer is done by the approach described in section 4, and that of the subsequent layers are randomly generated. The numerical results are presented in Table \ref{test2_table} and Figure \ref{test2_figure}, and the figures of the two-layer network is for the 2-200-1 model. The traces of the exact and numerical solutions on the plane $x=0$ and the breaking lines of these two networks are depicted in Fig. \ref{test2_figure}(c,d) and Fig. \ref{test2_figure}(e,f), respectively. 

Clearly, the two-layer network with 200 neurons (over 600 parameters) fails to approximate the solution well in average (see Table \ref{test2_table}) and point-wise (see Figure \ref{test2_figure}). A three-layer network with less than 8\% of parameters outperforms this large two-layer network in every aspects including breaking lines. Comparing these two networks, a three-layer network is more suitable than a two-layer network to accurately approximate the solution having a constant jump along a piece-wise line segment discontinuous interface.

\begin{remark}
Due to the random generation of some parameters, the training of 2-5-5-1 network is replicated five times and the best result is reported. We observe from the training process that the network may get trapped in a local minimum and fails to accurately approximate the solution. To address such issue, we introduce an adaptive process in \cite{cai2021multiadaptive} for obtaining a good initialization which is crucial for nonlinear optimization problems.
\end{remark}

 \begin{table}[htbp]
\centering
\caption{Relative errors of the problem with a piece-wise constant advection velocity field}
\vspace{5pt}
\begin{tabular}{|l|l|l|l|l|}
\hline
Network structure  &$\frac{\|u-\bar{u}^N_{_\cT}\|_0}{\|u\|_0}$ &$\frac{\vertiii{u-\bar{u}^N_{_\cT}}_{\bm\beta}}{\vertiii{u}_{\bm\beta}}$ & $\frac{\mathcal{L}^{1/2}(\bar{u}^N_{_\cT};\bf f)}{\mathcal{L}^{1/2}(\bar{u}^N_{_\cT};\bf 0)}$ & Parameters \\ \hline
2-30-1 &  0.487306 &0.556949 &  0.386919 & 91 \\ \hline
2-200-1 & 0.317839  & 0.402699&  0.259592 & 601 \\ \hline
2-5-5-1 & 0.086122  & 0.086131 &0.016945   & 46 \\ \hline
\end{tabular}
\centering
\label{test2_table}
\end{table}

\begin{figure}[htbp]
  \centering 
    \subfigure[2-layer NN approximation $\bar{u}^N_{_\cT}$]{ 
    \includegraphics[width=2.6in]{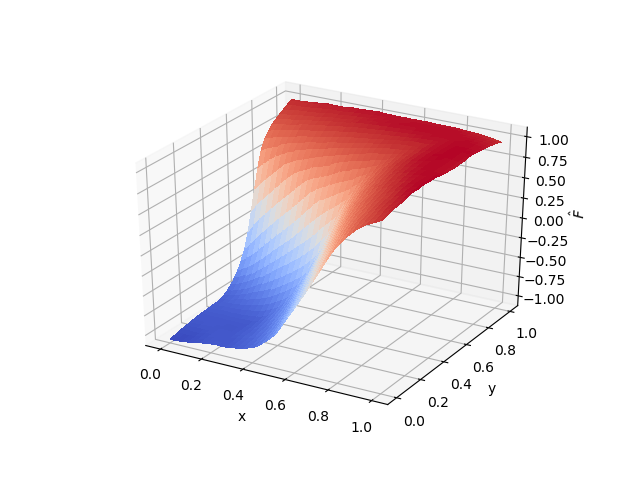}} 
  \subfigure[3-layer NN approximation $\bar{u}^N_{_\cT}$]{ 
    \includegraphics[width=2.6in]{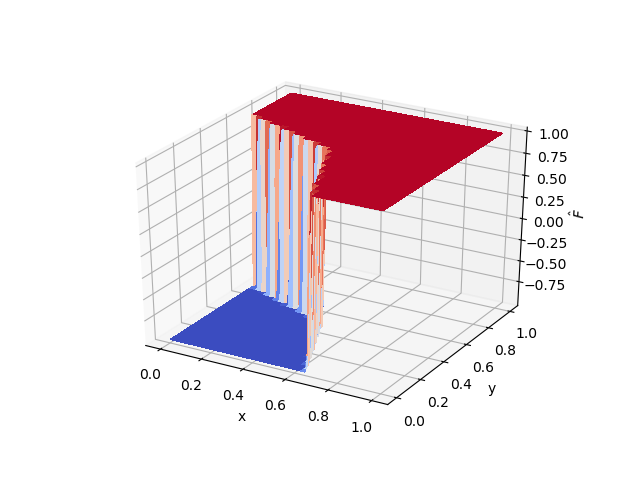}} 
\\
  \subfigure[2-layer NN vertical cross section on $x=0$]{ 
    \includegraphics[width=2.1in]{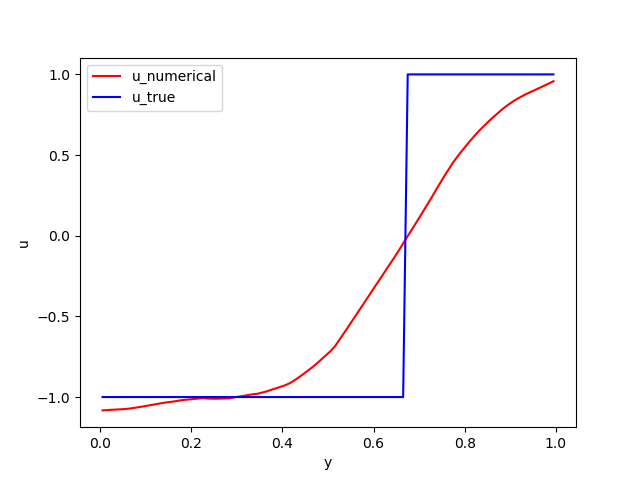}}
     \hspace{0.4in} 
\subfigure[3-layer NN vertical cross section on $x=0$]{ 
    \includegraphics[width=2.1in]{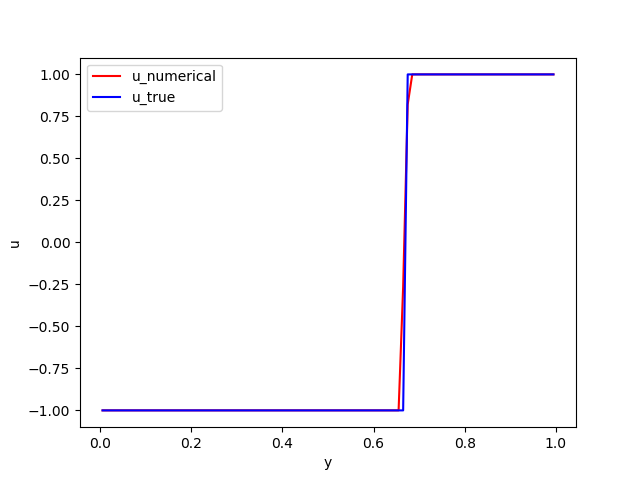}}
    \\
      \subfigure[2-layer NN breaking lines]{ 
    \includegraphics[width=1.7in]{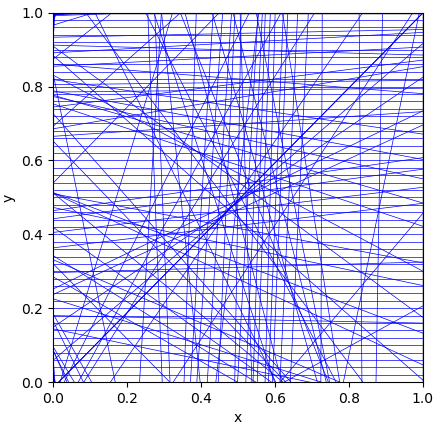}}
     \hspace{0.7in} 
\subfigure[3-layer NN breaking lines]{ 
    \includegraphics[width=1.7in]{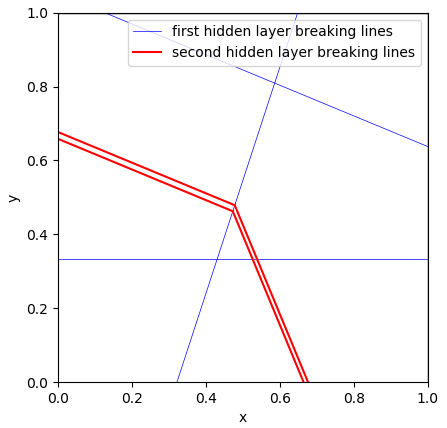}} 
  \caption{Approximation results of the problem with a piece-wise constant advection velocity field} 
  \label{test2_figure}
\end{figure}

Below we show theoretically that a three-layer neural network is sufficient for approximating the solution well (see Lemma~5.1 below). To make it slightly general, let  
\[
 \chi=\left\{\begin{array}{ll}
 \alpha_1, & \bx \in \Omega_1,\\[2mm]
 \alpha_2, & \bx \in \Omega_2.
 \end{array}
 \right.
 \]
Without loss of generality, assume that $\alpha_1 < \alpha_2$.
Let $p_1(\bx)$ and $p_2(\bx)$ be two-layer neural network functions given by
\[
p_i(\bx)=\alpha_1 +
\dfrac{\alpha_2-\alpha_1}{2\varepsilon} \Big(\sigma(\bxi_i\cdot\bx -a + \varepsilon) - \sigma(\bxi_i\cdot\bx -a-\varepsilon)\Big)
\]
for any $\varepsilon>0$ such that intersections between the domain $\Omega$ and the hyper-planes $\bxi_i\cdot\bx =a\pm\varepsilon$ are not empty.

\begin{lemma}
Let $p(\bx)=\max\{p_1(\bx),p_2(\bx)\}$, then we have 
 \begin{equation}\label{chi-est-2}
     \|\chi - p\|_{0,\Omega}
     =\left(\|\chi - p\|^2_{0,\Omega} + \|\chi_{\scriptsize{\bm\beta}} - p_{\scriptsize{\bm\beta}}\|^2_{0,\Omega}\right)^{1/2} 
     \leq \sqrt{\dfrac{2}{3}}\,D^{(d-1)/2} \big|\alpha_1-\alpha_2\big|\, \sqrt{\varepsilon},
 \end{equation}
where $D$ is the diameter of the domain $\Omega$.
\end{lemma}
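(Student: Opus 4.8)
The plan is to reduce the estimate to two applications of Lemma~3.2, one on each of the subdomains $\Upsilon_1$ and $\Upsilon_2$ on which ${\bm\beta}$ is constant. The crucial preliminary observation is that the maximum in $p=\max\{p_1,p_2\}$ selects $p_1$ on $\Upsilon_1$ and $p_2$ on $\Upsilon_2$. To see this, note that each $p_i(\bx)=\phi(\bxi_i\cdot\bx -a)$ shares the same profile $\phi(t)=\alpha_1+\tfrac{\alpha_2-\alpha_1}{2\varepsilon}\big(\sigma(t+\varepsilon)-\sigma(t-\varepsilon)\big)$, which is nondecreasing in $t$ since $\alpha_1<\alpha_2$. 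A direct computation gives $\bxi_1\cdot\bx -\bxi_2\cdot\bx=(2-\sqrt2)(x-y)$, so $\bxi_1\cdot\bx\ge\bxi_2\cdot\bx$ on $\Upsilon_1=\{y<x\}$ and $\bxi_1\cdot\bx\le\bxi_2\cdot\bx$ on $\Upsilon_2=\{y\ge x\}$. Monotonicity of $\phi$ then yields $p_1\ge p_2$ on $\Upsilon_1$ and $p_2\ge p_1$ on $\Upsilon_2$, whence $p=p_1$ on $\Upsilon_1$ and $p=p_2$ on $\Upsilon_2$; the two pieces agree on $y=x$, where $\bxi_1\cdot\bx=\bxi_2\cdot\bx$, so $p$ is continuous.

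With this piecewise identification, the equality in (\ref{chi-est-2}) follows as in Lemma~3.2. On $\Upsilon_i$ the function $p=p_i$ depends only on $\bxi_i\cdot\bx$, so $\nabla p$ is parallel to $\bxi_i$ almost everywhere; since $\bxi_i$ is normal to ${\bm\beta}|_{\Upsilon_i}$, we obtain $p_{\bm\beta}={\bm\beta}\cdot\nabla p=0$ a.e.\ on each $\Upsilon_i$. Likewise $\chi_{\bm\beta}=0$ a.e., because the discontinuity interface of $\chi$ is composed of streamlines of ${\bm\beta}$, so ${\bm\beta}$ is tangent to the interface and the directional derivative does not see the jump. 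Therefore $\chi_{\bm\beta}-p_{\bm\beta}=0$ a.e.\ in $\Omega$, which gives the stated equality.

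For the inequality I would write $\|\chi-p\|_{0,\Omega}^2=\|\chi-p_1\|_{0,\Upsilon_1}^2+\|\chi-p_2\|_{0,\Upsilon_2}^2$ using the same identification. On $\Upsilon_1$ the restriction of $\chi$ jumps exactly across the single hyper-plane $\bxi_1\cdot\bx=a$, since $\Omega_1\cap\Upsilon_1=\{\bx\in\Upsilon_1:\bxi_1\cdot\bx<a\}$; hence $\chi-p_1$ is precisely the error analyzed in Lemma~3.2, supported on the strip $\{|\bxi_1\cdot\bx-a|<\varepsilon\}\cap\Upsilon_1$, and the computation there bounds it by $\tfrac16 D^{d-1}(\alpha_1-\alpha_2)^2\varepsilon$ (intersecting with $\Upsilon_1$ only shrinks the integration domain, so the bound is preserved). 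The identical argument applies on $\Upsilon_2$. Adding the two contributions gives $\|\chi-p\|_{0,\Omega}^2\le\tfrac13 D^{d-1}(\alpha_1-\alpha_2)^2\varepsilon$, which is even slightly stronger than, and in particular implies, the claimed bound with constant $\sqrt{2/3}$.

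The main obstacle, and the only genuinely new point beyond Lemma~3.2, is the selection property of the maximum: one must verify that $p_1$ and $p_2$ are correctly ordered on each $\Upsilon_i$ throughout the transition strips, not merely away from them, which is exactly what the monotonicity of the common profile $\phi$ together with the sign identity for $\bxi_1\cdot\bx-\bxi_2\cdot\bx$ provides. Care is also needed to confirm that the two transition strips are disjoint (they lie in $\Upsilon_1$ and $\Upsilon_2$ respectively, so no double counting occurs) and that the behavior on the measure-zero set $y=x$, where ${\bm\beta}$ is discontinuous and the two interface lines meet, affects neither the $L^2$ norm nor the directional-derivative norm.
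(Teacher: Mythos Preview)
Your argument is correct and follows exactly the paper's route: identify $p=p_i$ on $\Upsilon_i$, observe $\chi_{\bm\beta}-p_{\bm\beta}=0$, split the $L^2$ norm over $\Upsilon_1\cup\Upsilon_2$, and invoke Lemma~3.2 on each piece. The paper's proof asserts the selection property $p|_{\Upsilon_i}=p_i$ without justification, so your monotonicity argument via the common profile $\phi$ and the sign of $(\bxi_1-\bxi_2)\cdot\bx=(2-\sqrt2)(x-y)$ actually supplies a detail the paper omits; your sharper constant $1/\sqrt{3}$ is also correct, the paper's $\sqrt{2/3}$ being a slight overcount.
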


\begin{proof}
Since $p(\bx)=p_i(\bx)$ in $\Upsilon_i$ for $i=1,\,2$ and $\Omega=\Upsilon_1\cup \Upsilon_2$, we have 
\[
\|\chi - p\|_{0,\Omega}^2=\|\chi - p_1\|_{0,\Upsilon_1}^2+\|\chi - p_2\|_{0,\Upsilon_2}^2.
\]
Combining with the fact that $\chi_{\scriptsize{\bm\beta}} - p_{\scriptsize{\bm\beta}}= 0$ in $\Omega$, 
(\ref{chi-est-2}) is then a direct consequence of Lemma~3.2.
\end{proof}

Similar as the discussion in \cite{he2018relu}, the maximum operation can be constructed by using an additional hidden layer of the ReLU network with 4 neurons:
\[
\max\{a,b\} = \dfrac{a+b}{2}+\dfrac{|a-b|}{2} = \bv \, \sigma\left(\bomega \left[\!\!\begin{array}{l}
a\\b\end{array}\!\!\right]\right)
\]
where the row vector and the $4\times 2$ matrix are given by
\[ \bv = \dfrac{1}{2}\,[1,-1,1,1] \quad \text{and} \quad \bomega=\left[\begin{array}{rr}
    1 & 1  \\
    -1 & -1 \\
    1 & -1 \\
    -1 & 1 \\
\end{array}\right],
\]
respectively.
Then this lemma indicates that a three-layer neural network is sufficient when the interface consists of two line segments.
\begin{remark}
In a similar fashion, a three-layer network can be constructed to approximate the solution with the interface consisting of more than two line segments.
\end{remark}

\subsection{Problem with a variable advection velocity field}

The last test problem is equation (\ref{pde}) defined on the domain $\Omega =(0,1)^2$ with a {\it variable} advection velocity field $\bm{\beta} = (-y,x)^T$ and $\gamma=f=0$
(see, e.g., \cite{bochev2016least,mu2018simple}).
With the inflow boundary condition $g$ given in (\ref{test3_g}),
the exact solution is a piece-wise constant given by
  \begin{equation}\label{test4_u}
u(x,y) = \left\{ \begin{array}{rl}
 -1,& (x,y)\in \Omega_1, \\[2mm]
 1,& (x,y)\in \Omega_2,
 \end{array}\right.
\end{equation}
where $\Omega_1=\{(x,y)\in\Omega:\, x^2+y^2<a^2\}$ and $\Omega_2=\{(x,y)\in\Omega:\, x^2+y^2 >a^2\}$.

For the LSNN method, again we use a uniform integration mesh $\cT$ with the mesh size $h=10^{-2}$; the finite difference quotient in (\ref{finite_diff}) is calculated with $\rho=h/10$
to avoid using values on both sides of the interface. Instead of intricately choosing the $\rho$ value, a robust approach will be developed in a forthcoming paper. Besides, the parameters are initialized by the method described in section 4 for the first layer and randomly for the subsequent layers. The learning rate starts with 0.005, and is reduced by half for every 50000 iterations. This learning rate decay strategy is used with 150000 iterations. Due to the random initialization of some parameters, 
numerical experiments are replicated three times and the best results for the three- and four-layer networks are reported in Table \ref{test3_table} and Figure \ref{test3_figure}. The traces of the exact and numerical solutions at the plane $x=0$ are depicted in Fig. \ref{test3_figure} (b) and (c) for the respective three- and four-layer networks. As shown in Fig. \ref{test3_figure} (b), the LSNN approximation of the three-layer network with 40 neurons at each layer smears the discontinuity. A careful examination of the iterative process, it seems to us that the smear is due to the initialization (see Fig. \ref{test4_figure1}).

 \begin{table}[htbp]
\centering
\caption{Relative errors of the problem with a variable advection velocity field}
\vspace{5pt}
\begin{tabular}{|l|l|l|l|l|}
\hline
Network structure  &$\frac{\|u-\bar{u}^N_{_\cT}\|_0}{\|u\|_0}$ &$\frac{\vertiii{u-\bar{u}^N_{_\cT}}_{\bm\beta}}{\vertiii{u}_{\bm\beta}}$ & $\frac{\mathcal{L}^{1/2}(\bar{u}^N_{_\cT};\bf f)}{\mathcal{L}^{1/2}(\bar{u}^N_{_\cT};\bf 0)}$ & Parameters \\ \hline
2-40-40-1 &  0.146226 &0.187823 & 0.108551 & 1761 \\ \hline
2-30-30-30-1 & 0.109266 & 0.122252& 0.039993 &1951 \\ \hline
\end{tabular}
\centering
\label{test3_table}
\end{table}

\begin{figure}[htbp]\label{test3_figure}
\centering
\subfigure[4-layer network approximation]{
\begin{minipage}[t]{0.29\linewidth}
\centering
\includegraphics[width=1.7in]{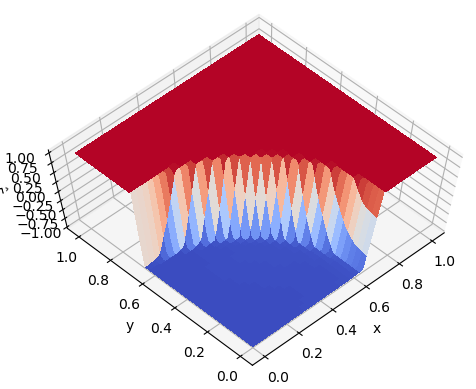}
\end{minipage}%
}%
\subfigure[3-layer NN cross section on $x=0$]{
\begin{minipage}[t]{0.34\linewidth}
\centering
\includegraphics[width=1.8in]{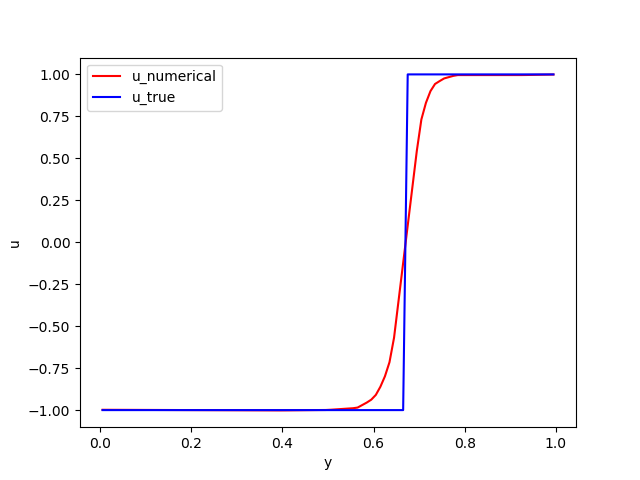}
\end{minipage}%
}%
\subfigure[4-layer NN cross section on $x=0$]{
\begin{minipage}[t]{0.34\linewidth}
\centering
\includegraphics[width=1.8in]{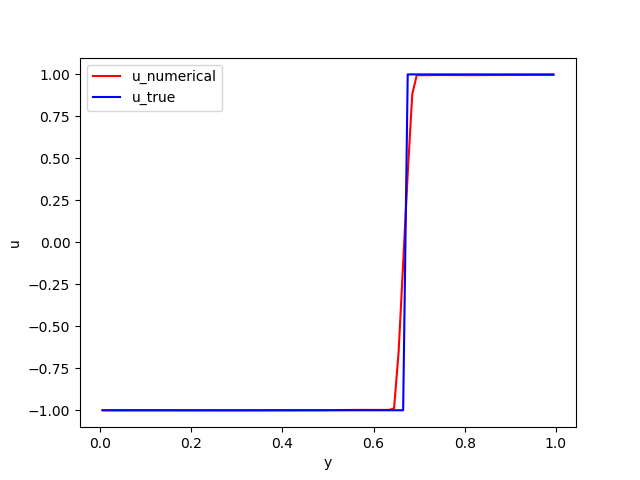}
\end{minipage}%
}%
\caption{Approximation results of the problem with a variable advection velocity field}
\end{figure}

\section{Method of model continuation}

As observed from our numerical experiments for the test problem with a curved discontinuous interface, initial of the parameters plays an important role in training neural networks. This is because the high dimensional nonlinear optimization usually have many solutions. Without a good initial, our previous simulations rely on over-parameterized neural networks to approximate the underlying problem well. The strategy of over-parameterization is computationally expensive.

Based on our numerical experiments in the previous sections, to generate a good initial for the parameters, we introduce the method of continuation through models for the advection-reaction problem in (\ref{pde}) with a variable advection velocity field ${\bm \beta}(\bx)$. 
To this end, let $\{{\bm\beta}_n(\bx)\}$ be a sequence of piece-wise constant vector fields. Consider the following advection-reaction problem with the advection velocity field ${\bm\beta}_n(\bx)$:
\begin{equation}\label{pde-n}
    \left\{\begin{array}{rccl}
    (u_n)_{{\bm\beta}_n} + \hat{\gamma}\, u_n  &= & f, &\text{ in }\,\, \Omega, \\
    u_n&=&g, &\text{ on }\,\, \Gamma_{-}.
    \end{array}\right.
\end{equation}
Let $u$ be the solution of (\ref{pde}), it is easy to see that $u-u_n$ satisfies
\begin{equation}\label{pde-err}
    \left\{\begin{array}{rccl}
    (u-u_n)_{{\bm\beta}_n} + \hat{\gamma}\, (u-u_n)  &= & u_{{\bm\beta}_n}-u_{{\bm\beta}}, &\text{ in }\,\, \Omega, \\
    u-u_n&=&0, &\text{ on }\,\, \Gamma_{-},
    \end{array}\right.
\end{equation}
which, together with the stability estimate in (\ref{stability}), implies 
\[
\|u-u_n\|_{0,\Omega}
\leq \vertiii{u-u_n}_{{\bm\beta}_n} \leq C\, \|u_{{\bm\beta}_n}-u_{{\bm\beta}}\|_{0,\Omega} =C\,\left(\int_\Omega \big(({\bm\beta}_n-{\bm\beta})\cdot\nabla u
\big)\,d\bx
\right)^{1/2}.
\]
Hence, if ${\bm\beta}_n$ is a good approximation to ${\bm\beta}$, then $u_n$ is a good approximation to $u$. This indicates that (\ref{pde-n}) provides a continuation process on the parameter $n$ for (\ref{pde}). 

For the test problem in section~5.4, since streamlines of the advection velocity field ${\bm\beta}=(-y,x)^T$ are quarter circles in $\Omega=(0,1)^2$ oriented counterclockwise, 
it is natural to approximate the quarter-circle by $n$ line segments. 
To this end, let $t_i=\dfrac{i\pi}{2n}$ for $i=0,1,...,n$ and \[
\Upsilon_{i+1}=\{(x,y)\in \Omega:\, (\sin t_i)x < (\cos t_i)y\,\,\mbox{ and }\,\, (\sin t_{i+1})x \ge (\cos t_{i+1})y\}.
\]
Then $\{\Upsilon_{i+1}\}_{i=0}^{n-1}$ forms a partition of $\Omega$ (see Fig. \ref{line_segment} for $n=4$).
This type of approximations leads to  
\[
\bm{\beta}_n = (\cos t_{i+1}-\cos t_{i}, \sin t_{i+1}-\sin t_{i})^T
\quad\mbox{in } 
\Upsilon_{i+1}
\]
for $i=0,1,...,n-1$. Note that ${\bm \beta}_2$ is the same vector field given in (\ref{test_line_beta}). Hence, (\ref{pde-n}) with $n=2$ and the test problem in section~5.3 are the same.

The method of model continuation starts with a three-layer neural network (2-5-5-1) to approximate $u_2$ (see the third row of Table \ref{test2_table} and Fig. \ref{test2_figure} (b,d)). This trained network is used as an initial for the parameters in the hidden layers of the 2-6-6-1 network to approximate $u_3$ by randomly generated the parameters of new neurons. The initial for the output weights and bias may be chosen as the solution of the system (\ref{linear_system}). The adaptive learning rate strategy which starts with 0.01 and decays by 20\% for every 50000 iterations is implemented with the method. The networks for $u_n$ with $n=4,5$ and for the test problem in section 5.4 are initialized sequentially in a similar fashion. Numerical results for approximating $u_n$ and $u$ are reported in Table \ref{test4_table}, and the traces of the exact and numerical solutions at the plane $x=0$ are depicted in Fig. \ref{test4_figure1}. 
The third and fourth columns show that the difficulty of the corresponding problems increase as the number of line segments increase. The fifth column shows that $u_n$ approaches to $u$ monotonously.
Comparing Table 5 with the last row of Table 6, it is clear that the method of model continuation is capable of reducing the size of the network significantly.

\begin{figure}[htbp]
    \centering
    \includegraphics[width=2.2in]{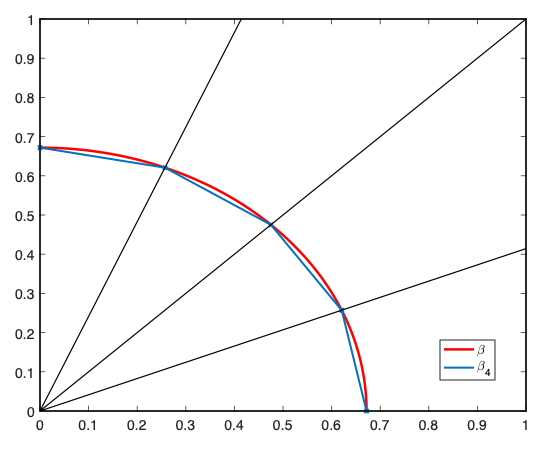}
    \caption{Discontinuous interface}
    \label{line_segment}
\end{figure}

\begin{table}[htbp]
\caption{Relative errors of the problem with discontinuity along line segments}
\vspace{5pt}
\begin{tabular}{|l|l|l|l|l|l|l|}
\hline
$n$&Network structure & $\frac{\|u_n-\bar{u}^N_{_\cT}\|_0}{\|u_n\|_0}$ &
$\frac{\vertiii{u_n-\bar{u}^N_{_\cT}}}{\vertiii{u_n}}$ &
$\frac{\|u-\bar{u}^N_{_\cT}\|}{\|u\|}$ &
$\frac{\mathcal{L}^{1/2}(\bar{u}^N_{_\cT};\bf f)}{\mathcal{L}^{1/2}(\bar{u}^N_{_\cT};\bf 0)}$ & Parameters \\ \hline
3&2-6-6-1& 0.075817 & 0.080026 &0.244483  &0.059422 &61 \\ \hline
4&2-6-6-1&0.104372  &0.110954 &0.216481  &0.064744 &61 \\ \hline
5&2-8-8-1& 0.097836  &0.109648 &  0.135606 &0.049938 &97 \\ \hline
curve&2-25-25-1&0.141261 &0.187616 & 0.141261&0.077233 & 726 \\ \hline
\end{tabular}
\centering
\label{test4_table}
\end{table}

\begin{figure}[htbp]\label{test4_figure1}
\centering
\subfigure[Vertical cross section of the \newline problem (\ref{pde-n}) with $n=3$ on the \newline plane $x=0$]{
\begin{minipage}[t]{0.3\linewidth}
\centering
\includegraphics[width=1.75in]{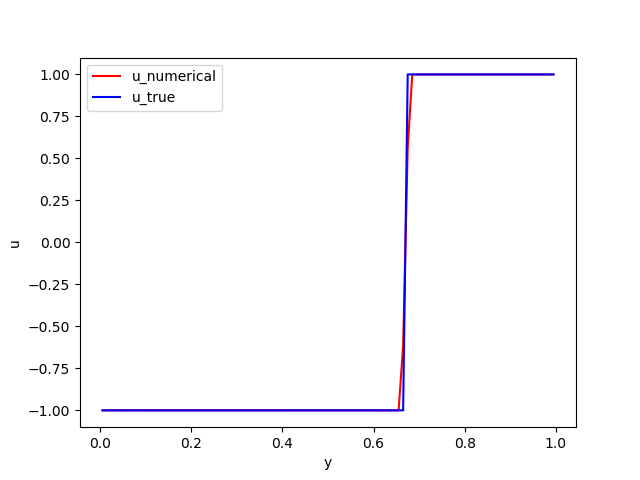}
\end{minipage}%
}%
\subfigure[Vertical cross section of the \newline problem (\ref{pde-n}) with $n=4$ on the \newline plane $x=0$]{
\begin{minipage}[t]{0.3\linewidth}
\centering
\includegraphics[width=1.75in]{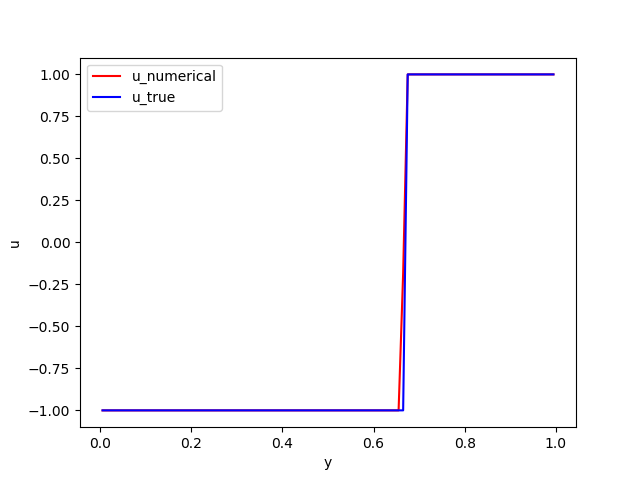}
\end{minipage}%
}%
\centering
\subfigure[Vertical cross section of the \newline problem (\ref{pde-n}) with $n=5$ on the \newline plane $x=0$]{
\begin{minipage}[t]{0.3\linewidth}
\centering
\includegraphics[width=1.75in]{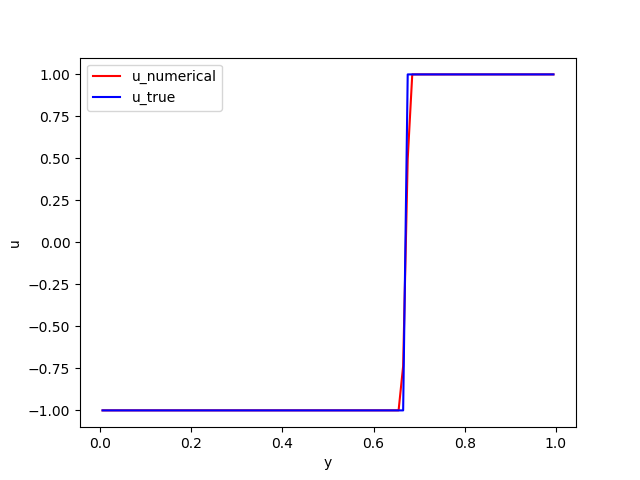}
\end{minipage}%
}%
\\
\subfigure[Vertical cross section of the original \newline problem (\ref{test4_u}) on the plane $x=0$]{
\begin{minipage}[t]{0.4\linewidth}
\centering
\includegraphics[width=1.8in]{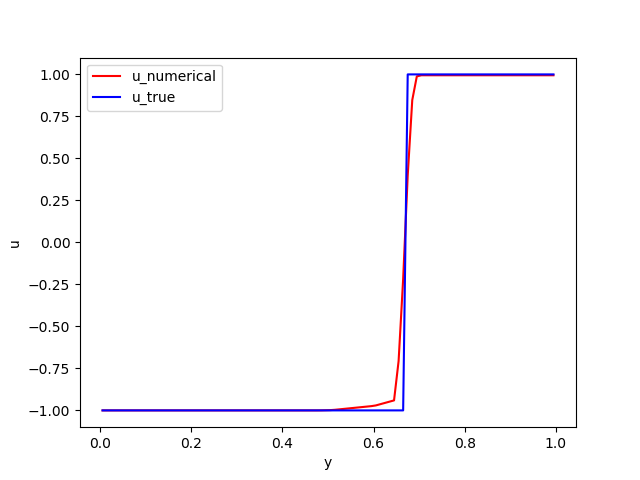}
\end{minipage}%
}%
\subfigure[Breaking lines of the original problem (\ref{test4_u})]{
\begin{minipage}[t]{0.4\linewidth}
\centering
\includegraphics[width=2.1in]{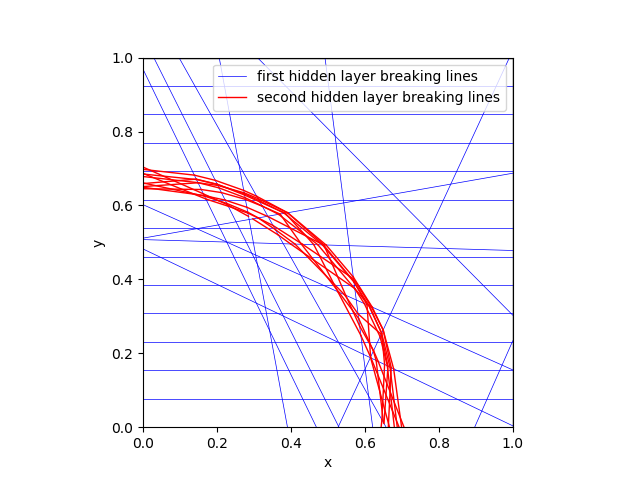}
\end{minipage}%
}%
\caption{Approximation results using the method of model continuation}
\end{figure}

\section{Discussions and Conclusions}

We proposed the LSNN method for solving the linear advection-reaction problem. The least-squares formulation, based on a direct application of the least-squares principle to the underlying problem, does not require additional smoothness of the solution if $f\in L^2(\Omega)$. In the $V_{\bm\beta}$ norm, the LSNN approximation is proved to be quasi-optimal, i.e., the error of the LSNN approximation is bounded above by the approximation error of the network.

A major challenge in numerical simulation of hyperbolic partial differential equations is the discontinuity of their solutions. For the linear transport problem in two dimensions, by decomposing the discontinuous solution into the sum of a piece-wise constant function and a continuous piece-wise smooth function, we are able to show theoretically and numerically that the LSNN method using a (at most) three-layer ReLU neural network is capable of approximating the discontinuous solution accurately without oscillation. In particular, the piece-wise constant solution can be approximated well by a ReLU network with a small number of neurons.

Numerical results presented in section 5 show that it is important to use a proper neural network in order to accurately approximate the solution of the underlying problem with fewer parameters. How to automatically design such a proper network, in terms of their width and depth, is an open and fundamental question for numerically solving partial differential equations within the prescribed accuracy. Following our recent paper on adaptive neuron enhancement method \cite{LiuCai}, this will be addressed in the forthcoming paper.

The procedure of training the value of the parameters is a problem in non-convex optimization which usually has many solutions and are complicated and computationally demanding. In order to obtain a desired solution, we introduced a method of model continuation for providing a good first approximation. Numerical results show that this method is effective for reducing the number of the parameters of the network. 
Moreover, a good initial is very helpful in training as well. 

Nevertheless, training is still a challenging problem since the learning rate of the methods of the gradient type is difficult to tune. A reasonably good learning rate can only be discovered through the method of trial and error. 
Using NNs to solve PDEs is relatively new, developing fast solvers is an open and challenging problem and requires lots of efforts from numerical analysts. Because of its great potential and many difficulties at the same time, machine learning is a hot research topic in scientific computing.


\bigskip
\bibliographystyle{ieee}
\bibliography{main.bbl}

\end{document}